    \pgfplotsset{compat=newest}
    \definecolor{mynavy}{RGB}{0,68,136}
    \definecolor{mysalmon}{RGB}{238,153,170}
    \definecolor{mybrown}{RGB}{153,119,0}
    \definecolor{myblue}{RGB}{102,153,204}
    \definecolor{myyellow}{RGB}{238,204,102}
    \definecolor{myred}{RGB}{153,68,85}
    \pgfplotsset{colorblindstyle/.style={
        minor x tick num=1,
        xtick pos=left,
        ytick pos=left,
        enlarge x limits=false,
        every x tick/.style={color=black, thin},
        every y tick/.style={color=black, thin},
        tick align=outside,
        xlabel near ticks,
        ylabel near ticks,
        cycle list name=colorblindcycle,
    }}
\newcommand*{\bbN}{\mathbb{N}}
\newcommand*{\bbP}{\mathbb{P}}
\newcommand*{\bbR}{\mathbb{R}}
\newcommand*{\errvec}{\mathrm{err}}
\newcommand*{\hpipe}{\rotatebox[origin=c]{90}{$|$}}
\newcommand*{\abbrev}[1]{\textsc{#1}}
\newcommand*{\transp}{^{\mkern-1.5mu{\scriptscriptstyle\mathsf{T}}}}
\newcommand*{\txteq}[1]{\overset{\text{\tiny #1}}{=}}
\DeclareMathOperator{\rowsp}{rowsp}
\DeclareMathOperator{\spn}{span}
\DeclarePairedDelimiter{\abs}{\lvert}{\rvert}
\DeclarePairedDelimiter{\norm}{\lVert}{\rVert}
\DeclarePairedDelimiter{\inp}{\langle}{\rangle}
\begin{document}
    \title{The Arnoldi Aggregation for Approximate Transient Distributions of Markov Chains}
    \titlerunning{Arnoldi Aggregations for Transient Distributions of \abbrev{dtmc}s}
    \author{Patrick Sonnentag\textsuperscript{({\small\Letter})}\orcidID{0009-0002-7319-5884} \and Fabian Michel\orcidID{0009-0005-7768-9111} \and Markus Siegle\orcidID{0000-0001-7639-2280}}
    \authorrunning{P. Sonnentag et al.}
    \institute{University of the Bundeswehr Munich, Institute for Computer Engineering, Werner-Heisenberg-Weg 39, Neubiberg, 85579, Germany\\
    \email{\{patrick.sonnentag,fabian.michel,markus.siegle\}@unibw.de}}
    \maketitle
    \begin{abstract}
        The paper proposes a new aggregation method, based on the Arnoldi iteration, for computing approximate transient distributions of Markov chains.
        This aggregation is not partition-based, which means that an aggregate state may represent any portion of any original state, leading to a reduced system which is not a Markov chain.
        Results on exactness (in case the algorithm finds an invariant Krylov subspace) and minimality of the size of the Arnoldi aggregation are proven.
        For practical use, a heuristic is proposed for deciding when to stop expanding the state space once a certain accuracy has been reached.
        Apart from the theory, the paper also includes an extensive empirical section where the new aggregation algorithm is tested on several models and compared to a lumping-based state space reduction scheme.
        \keywords{Aggregation \and Arnoldi Iteration \and Krylov Subspace Method \and Markov Chains \and State Space Reduction}
    \end{abstract}
    \section{Introduction \& Motivation}\label{sec:intro}
Approximate solutions of Markov chains by means of state space reduction have received a lot of attention in the literature.
One of the earliest works is that of Simon and Ando~\cite{simonando1961aggregation} on nearly decomposable Markov chains.
These techniques were further developed and applied to models of computer systems by Courtois~\cite{courtois1977decomposability} which also led to iterative aggregation/disaggregation methods~\cite{koury1984iterativencd}.
Most of these early works on state space aggregation focused on the stationary distribution, while Buchholz in his seminal paper on lumpability~\cite{buchholz1994lumpability} also considered transient distributions.
An overview of general transient solution methods is also given in the paper \cite{deSouzaeSilva2000transientmc}, where --- among several others --- a Krylov subspace based method is briefly described.
It is, of course, important to control the error caused by aggregation/disaggregation; see, for instance,~\cite{bucholz2014aggregation}.
Recently, state space reduction techniques for obtaining approximate transient distributions of Markov chains with formal error bounds have been studied~\cite{abate2021aggregation}.
This has been generalized in~\cite{michel2025formalbounds}, the latter work characterising exactness of Markov chain aggregations, which goes beyond the well-known concept of lumping.

To achieve a space reduction of a Markov chain with transition matrix $P$, the primary question is how to aggregate the state space, i.e.\ how to construct the reduced system.
More concretely, the question is how to choose the size and the transition matrix of the reduced system.
Secondly, once a solution for the reduced system has been obtained,
a strategy is needed for mapping this solution back onto the original chain, i.e.\ one needs to specify a disaggregation scheme, usually in the form of a disaggregation matrix $A$.
The third question concerns the choice of the initial vector for the aggregated system.

Common approaches to state space reduction use a partitioning of the original state space, leading to one aggregated state per class, which means that every original state belongs to exactly one state of the reduced system.
The reduced system is usually considered to be a Markov chain again, its transition matrix constructed as a stochastic matrix, and the initial vector of the reduced system a probability vector.
In this paper, we consider more general state space reductions, where a reduced state can represent any portion of any original state.

The Arnoldi iteration~\cite{arnoldi1951iteration} is a well-known algorithm for computing an orthonormal basis of the Krylov subspace of a vector $v$ w.r.t.\ a matrix $M$, i.e.\ the space $\spn\Set{v\transp, v\transp M, \dots, v\transp M^{j - 1}}$.
We develop the Arnoldi aggregation for Markov chains, based on the Arnoldi iteration, where the initial vector is the initial distribution ($v=p_0$) and $M=P$ is the transition matrix of the \abbrev{dtmc}.
The vectors computed as the basis of this Krylov subspace are used as the rows of the disaggregation matrix $A$, and the step matrix of the reduced system is obtained from the multiplication factors appearing in the Gram-Schmidt orthonormalization part of the Arnoldi process.
The step matrix thus obtained is not a stochastic matrix, i.e.\ the reduced system is not a Markov chain, and --- in consequence --- the vectors obtained may not be probability distributions.
We show that Arnoldi aggregations are either exact with minimal state space size or have an error of zero for the first $j-1$ time steps (where $j$ is the number of Arnoldi iterations performed)  with minimal state space size.
For transient time points larger than the number of Arnoldi iterations (which is the interesting case), the error can be estimated by a heuristic during the Arnoldi iteration, thus making it possible to dynamically control the size of the reduced system based on the desired error.

The key contributions of this paper, which is based on the thesis~\cite{sonnentag2025thesis}, are as follows:
A new aggregation scheme for Markov chains is proposed and analysed from a theoretical point of view, leading to exactness and minimality results.
In view of finite floating point precision, practically viable termination criteria for the iterative Arnoldi procedure in the aggregation setting are established.
Empirical results are provided which confirm the theoretical findings and show that the Arnoldi aggregation can be superior to other approaches.

The remainder of the paper is structured as follows:
Starting in \cref{sec:preliminaries}, we explain basic notation and formally introduce the Arnoldi iteration.
In \cref{sec:building-aggregations}, we introduce the Arnoldi aggregation and derive its main theoretical properties.
We further consider how to work with Arnoldi aggregations in floating point arithmetic, introduce the final algorithm and analyse its runtime and memory complexity.
Lastly, in \cref{sec:review}, Arnoldi aggregations are evaluated and compared to Exlump aggregations from~\cite[Algorithm~3]{michel2025formalbounds} on a range of different models.
A summary and outlook for future work are given in \cref{sec:conclusion}.

    \section{Preliminaries}\label{sec:preliminaries}
\subsection{Notation}\label{subsec:notation}
Let $u,v \in \bbR^n$ and $M \in \bbR^{n \times m}$.
Then, $\abs{v}$ and $\abs{M}$ denote the vector and the matrix with the absolute value applied component-wise.
Furthermore, denote with $\mathbf{0}_n$ and $\mathbf{1}_n$ the vectors in $\bbR^n$ with all entries zero or one, respectively.
Let $e_j \in \bbR^j$ be the $j$th standard basis vector of $\bbR^j$, and
$I_n \in \bbR^{n \times n}$ the identity matrix.
Lastly, define $\inp{u, v} \coloneqq u\transp v$, $\norm{v}_1 \coloneqq \inp{\abs{v}, \mathbf{1}_n}$ and the matrix norm $\norm{M}_\infty$ as the maximum absolute row sum of $M$.

Here, we are going to work with time-homogeneous discrete-time Markov chains with finite state spaces $S = \set{1, \dots, n}$ solely.
Furthermore, if $X_k$ is the state of the Markov chain at time $k$, let $P \in \bbR^{n \times n}$ be the transition matrix of the Markov chain where $P(i, j) \coloneqq \bbP(X_{k + 1} = j \,|\, X_k = i)$.
With an initial distribution $p_0 \in \bbR^n$, transient distributions are given by $p_k\transp = p_0\transp P^k$.

Next, we consider state space reductions through aggregations.
In an aggregation of dimension $m \leq n$, the (arbitrary) \emph{aggregated step matrix} $\Pi \in \bbR^{m \times m}$ shall approximate the dynamics of $P$.
Let $\pi_0 \in \bbR^m$ be the (arbitrary) \emph{initial aggregated vector} and $\pi_k\transp \coloneqq \pi_0\transp\Pi^k$ the \emph{transient aggregated vector}.
To return to an $n$-dimensional space, we use the (arbitrary) \emph{disaggregation matrix} $A \in \bbR^{m \times n}$ to define \emph{approximated transient distributions} as $\tilde{p}_k\transp \coloneqq \pi_k\transp A$.
Finally, following~\cite[Def.~8]{michel2025formalbounds}, an aggregation is \emph{dynamic-exact} if $\Pi A = A P$.
If further $p_0 = \tilde{p}_0$ holds, it is \emph{exact}.
This is motivated by dynamic-exactness implying $\tilde{p}_k\transp = \tilde{p}_0\transp P^k$ and exactness even offering $p_k = \tilde{p}_k$.
The $k$th \emph{transient error vector} is given through $\errvec_k \coloneqq \tilde{p}_k - p_k$ with $\norm{\errvec_k}_1$ denoting the \emph{transient error} after $k$ steps.
\subsection{Arnoldi Iteration}\label{subsec:arnoldi-iteration}
For a vector $v \in \bbR^n$ and a matrix $M \in \bbR^{n \times n}$, the Arnoldi iteration, as first introduced in~\cite{arnoldi1951iteration}, builds an orthonormal basis $q_1\transp, \dots, q_j\transp$ of the Krylov subspace
\[
    \mathcal{K}^j(v, M) \coloneqq \spn\Set{v\transp, v\transp M, \dots, v\transp M^{j - 1}},
\]
using the Gram-Schmidt procedure, as done in~\cref{alg:arnoldi-iteration}.
\begin{algorithm}[ht]
    \caption{Arnoldi Iteration}\label{alg:arnoldi-iteration}
    \begin{algorithmic}[1]
        \State Let $v \in \bbR^n$, $M \in \bbR^{n \times n}$.
        \State $q_1 \coloneqq \frac{v}{\norm{v}_2}$
        \State $Q_1 \coloneqq \begin{pmatrix}\hpipe & q_1\transp & \hpipe\end{pmatrix}$
        \For{$j = 1, 2 \dots$}
            \State $r_1\transp \coloneqq q_j\transp M$
            \ForAll{$i = 1, 2 \dots, j$}
                \State $h_{j, i} \coloneqq \inp{r_i, q_i}$
                \State $r_{i + 1} \coloneqq r_i - h_{j, i} q_i$
            \EndFor
            \State $h_{j, j + 1} \coloneqq \norm{r_{j + 1}}_2$
            \State $H_j \coloneqq
            \begin{cases}
                \begin{pmatrix}h_{1,1}\end{pmatrix} & \text{if $j = 1$}\\
                \begin{pmatrix}
                \multicolumn{2}{c}{\multirow{2}{*}{$H_{j - 1}$}} & \mathbf{0}_{j - 2}\\
                &&h_{j - 1, j}\\
                h_{j, 1} & \cdots & h_{j,j}
            \end{pmatrix} & \text{else}
            \end{cases}$
            \If{criterion is true}
                \Return $H_j$, $Q_j$
            \EndIf
            \State $q_{j + 1} \coloneqq \frac{r_{j + 1}}{h_{j, j + 1}}$
            \State $Q_{j + 1} \coloneqq \begin{pmatrix}&Q_j&\\\hpipe & q_{j + 1}\transp & \hpipe\end{pmatrix}$
        \EndFor
    \end{algorithmic}
\end{algorithm}
There, we can see the Gram-Schmidt procedure in lines~2,~4--9~and~12; lines~3,~10~and~13 construct some matrices $H_j \in \bbR^{j \times j}$ and $Q_j \in \bbR^{j \times n}$, and line~11 provides the return value once some arbitrary termination criterion is met (suggestions for this will follow later).
Usually, $h_{j, j + 1} = 0$ is part of this criterion to avoid division by zero.
The relation
\begin{equation}\label{eq:arnoldi-relation}
H_j Q_j + h_{j, j + 1} e_j q_{j + 1}\transp = Q_j M
\end{equation}
holds if $\mathcal{K}^j(v, M) \subsetneq \mathcal{K}^{j + 1}(v, M)$ (with $e_j \in \bbR^j$).
If $\mathcal{K}^j(v, M) = \mathcal{K}^{j + 1}(v, M)$ holds, (i.e., the Krylov subspace is invariant under $M$), \cref{eq:arnoldi-relation} reduces to
\begin{equation}\label{eq:red-arnoldi-relation}
H_j Q_j = Q_j M.
\end{equation}
While not hard to show, proving~\cref{eq:arnoldi-relation,eq:red-arnoldi-relation} thoroughly requires lengthy calculations.
Refer to~\cite[Sec.~6.2]{saad2011numericalmethods} for a proof.
There, a slight variation of the Arnoldi iteration is used, with rearranged order of operations for the underlying Gram-Schmidt procedure.

    \section{Using the Arnoldi Iteration to Build Aggregations}\label{sec:building-aggregations}
Both \cref{eq:arnoldi-relation,eq:red-arnoldi-relation}, but especially \cref{eq:red-arnoldi-relation}, are reminiscent of the definition of dynamic-exact aggregations of Markov chains in \cref{subsec:notation}.
Using these as a basis while further guaranteeing $p_0 = \tilde{p}_0$ motivates:
\begin{definition}[Arnoldi aggregation]\label{def:arnoldi-aggregation}
With $P \in \bbR^{n \times n}$ as the transition matrix of a Markov chain and $p_0 \in \bbR^n$ an initial distribution, the aggregation with $\Pi \coloneqq H_j$, $A \coloneqq Q_j$, resulting from the Arnoldi iteration with $P$ and $p_0$, and
$
    \pi_0 \coloneqq (\norm{p_0}_2, 0, \dots, 0)\transp \in \bbR^j
$,
is the \emph{Arnoldi aggregation} of size $j$ for $P$ and $p_0$.
\end{definition}
\subsection{Exact and Initially Exact Arnoldi Aggregations}\label{subsec:exact-and-initially-arnoldi-aggr}
We start by exploring some properties of Arnoldi aggregations and their connection to (initially) exact aggregations.
\begin{lemma}\label{lem:arnoldi-aggregation-pi_k-form}
In an Arnoldi aggregation of size $j$, it holds that
\[
    \pi_k = (\underbrace{\lambda_1, \dots, \lambda_{k + 1},}_{\textrm{all }\in \; \bbR} \underbrace{0, \dots, 0}_{(j - k -1) \times})\transp
    \quad \textrm{for } 1 \leq k \leq j - 1 \textrm{ and for some real numbers } \lambda_i.
\]
\end{lemma}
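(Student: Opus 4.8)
The plan is to proceed by induction on $k$, exploiting the almost-lower-triangular (upper-Hessenberg-transposed) structure of $H_j$ together with the recursion $\pi_{k+1}\transp = \pi_k\transp H_j$. First I would read off the shape of $H_j$ from line~10 of \cref{alg:arnoldi-iteration}: the entry $h_{i,i+1}$ (the sole superdiagonal entry in row $i$) may be nonzero, every entry strictly above this superdiagonal is forced to be zero by construction, and the subdiagonal and diagonal entries $h_{i,\ell}$ for $\ell \le i$ are arbitrary reals. In other words, $H_j(i,\ell) = 0$ whenever $\ell > i+1$.

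For the base case $k = 1$, note that $\pi_0\transp = (\norm{p_0}_2, 0, \dots, 0)$, so $\pi_1\transp = \pi_0\transp H_j = \norm{p_0}_2 \cdot (\text{first row of } H_j)$, and the first row of $H_j$ has the form $(h_{1,1}, h_{1,2}, 0, \dots, 0)$, which matches the claimed pattern $(\lambda_1, \lambda_2, 0, \dots, 0)$. For the inductive step, assume $\pi_k\transp = (\lambda_1, \dots, \lambda_{k+1}, 0, \dots, 0)$ with $k \le j-2$. Then $\pi_{k+1}\transp = \pi_k\transp H_j$, whose $\ell$th component is $\sum_{i=1}^{k+1} \lambda_i H_j(i,\ell)$. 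By the zero pattern $H_j(i,\ell) = 0$ for $\ell > i+1$, each term with $i \le k+1$ can only contribute when $\ell \le i+1 \le k+2$; hence components $\ell = k+3, \dots, j$ vanish, and the first $k+2$ components are some real numbers $\lambda_1', \dots, \lambda_{k+2}'$. This is exactly the asserted form for $\pi_{k+1}$, completing the induction for $1 \le k \le j-1$.

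The only subtlety — and the main thing to get right rather than a genuine obstacle — is bookkeeping at the boundary: one must check that the induction is only claimed up to $k = j-1$, since at $k = j-1$ the vector $\pi_{j-1}$ has all $j$ components potentially nonzero and there is no "tail of zeros" left to propagate, so the statement correctly stops there. I would also remark that this argument uses \emph{only} the structural shape of $H_j$ and $\pi_0$ from \cref{def:arnoldi-aggregation}, not \cref{eq:arnoldi-relation} or any invariance property of the Krylov subspace, so the lemma holds unconditionally for every Arnoldi aggregation of size $j$.
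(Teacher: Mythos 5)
Your proof is correct and follows exactly the route the paper takes: induction on $k$ using the lower-Hessenberg zero pattern of $H_j$ (i.e.\ $H_j(i,\ell)=0$ for $\ell>i+1$, read off from line~10 of \cref{alg:arnoldi-iteration}) together with $\pi_0\transp=(\norm{p_0}_2,0,\dots,0)$ and $\pi_{k+1}\transp=\pi_k\transp H_j$; the paper's proof is just the one-line version of this argument. Your boundary bookkeeping at $k=j-1$ and the observation that no Krylov-invariance or \cref{eq:arnoldi-relation} is needed are both accurate.
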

\begin{proof}
    Follows by induction on $k$ and the form of $H_j$.
\end{proof}
\begin{lemma}\label{lem:closed-form-error}
In Arnoldi aggregations of size $j$ after $k$ steps, we have
\[
    \pi_0\transp H_j^k Q_j + \sum_{i = 0}^{k - 1} \pi_0\transp H_j^i \left(h_{j, j + 1} e_j q_{j + 1}\transp\right) P^{k - 1 - i} = \pi_0\transp Q_j P^k
\]
\end{lemma}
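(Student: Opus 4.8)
The statement is an exact identity relating the aggregated evolution $\pi_0\transp H_j^k Q_j$ to the "true" evolution $\pi_0\transp Q_j P^k$, with an explicit correction term coming from the non-invariant part of the Arnoldi relation. The natural approach is induction on $k$, using \cref{eq:arnoldi-relation} as the engine.

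\smallskip

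\noindent\textbf{Base case.} For $k = 0$ the claimed identity reads $\pi_0\transp Q_j = \pi_0\transp Q_j$ (the sum is empty), which is trivially true. One could also check $k = 1$ as a sanity step: multiplying \cref{eq:arnoldi-relation} on the left by $\pi_0\transp$ gives $\pi_0\transp H_j Q_j + \pi_0\transp h_{j,j+1} e_j q_{j+1}\transp = \pi_0\transp Q_j P$, which is exactly the statement for $k=1$.

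\smallskip

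\noindent\textbf{Inductive step.} Assume the identity holds for some $k \geq 0$. I would start from the right-hand side for $k+1$, namely $\pi_0\transp Q_j P^{k+1} = \big(\pi_0\transp Q_j P^k\big) P$, substitute the inductive hypothesis for $\pi_0\transp Q_j P^k$, and distribute the trailing $P$:
\[
\pi_0\transp Q_j P^{k+1}
= \pi_0\transp H_j^k Q_j P
+ \sum_{i=0}^{k-1} \pi_0\transp H_j^i \big(h_{j,j+1} e_j q_{j+1}\transp\big) P^{k-i}.
\]
Now apply \cref{eq:arnoldi-relation}, in the form $Q_j P = H_j Q_j + h_{j,j+1} e_j q_{j+1}\transp$, to the first term: it becomes $\pi_0\transp H_j^{k+1} Q_j + \pi_0\transp H_j^k \big(h_{j,j+1} e_j q_{j+1}\transp\big)$. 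The second of these two pieces is precisely the $i = k$ term that is missing from the sum, so reindexing the sum to run from $i = 0$ to $k$ (with exponent $P^{(k+1)-1-i}$) completes the step. The whole argument is a two-line telescoping once the Arnoldi relation is plugged in.

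\smallskip

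\noindent\textbf{Main obstacle.} There is no real analytic difficulty here; the only thing to be careful about is the applicability of \cref{eq:arnoldi-relation}. As stated in the excerpt, \cref{eq:arnoldi-relation} holds when $\mathcal{K}^j(p_0, P) \subsetneq \mathcal{K}^{j+1}(p_0, P)$; in the invariant case one has instead \cref{eq:red-arnoldi-relation}, i.e.\ $h_{j,j+1} = 0$ and $H_j Q_j = Q_j P$. Both cases are in fact covered uniformly by writing $Q_j P = H_j Q_j + h_{j,j+1} e_j q_{j+1}\transp$ and noting that when the subspace is invariant the extra term vanishes because $h_{j,j+1} = 0$ (so $q_{j+1}$ need never be defined). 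Thus I would phrase the proof so that it simply invokes this single relation, remarking parenthetically that the invariant case is the degenerate instance with $h_{j,j+1} = 0$. \cref{lem:arnoldi-aggregation-pi_k-form} is not needed for this lemma.
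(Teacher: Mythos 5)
Your proof is correct and is essentially the paper's own argument: induction on $k$ with the trivial base case $k=0$, multiplying the inductive hypothesis by $P$ and substituting the Arnoldi relation $Q_j P = H_j Q_j + h_{j,j+1} e_j q_{j+1}\transp$ into the leading term before reindexing the sum. Your extra remark that the invariant case is the degenerate instance with $h_{j,j+1}=0$ is a sensible clarification the paper leaves implicit, but it does not change the route.
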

\begin{proof}
    By induction on $k$. Clearly, $\pi_0\transp I_j Q_j + 0 = \pi_0\transp Q_j I_j$. Induction step:
    \begin{align*}
        \pi_0\transp Q_j P^{k + 1} &= \pi_0\transp H_j^k Q_j P + \sum_{i = 0}^{k - 1} \pi_0\transp H_j^i \left(h_{j, j + 1} e_j q_{j + 1}\transp\right) P^{k - i}\\
        &\txteq{\labelcref{eq:arnoldi-relation}} \pi_0\transp H_j^k \left( H_j Q_j + h_{j, j + 1} e_j q_{j + 1}\transp \right) + \sum_{i = 0}^{k - 1} \pi_0\transp H_j^i \left(h_{j, j + 1} e_j q_{j + 1}\transp\right) P^{k - i}\\
        &= \pi_0\transp H_j^{k + 1} Q_j + \sum_{i = 0}^k \pi_0\transp H_j^i \left(h_{j, j + 1} e_j q_{j + 1}\transp\right) P^{k - i}.
    \end{align*}
\end{proof}
\begin{proposition}\label{prop:arnoldi-aggr-inital-exact}
An Arnoldi aggregation of size $j$ has $p_k = \tilde{p}_k$ for $0 \leq k \leq j - 1$.
\end{proposition}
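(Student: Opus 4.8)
The plan is to combine the two preceding lemmas. First I would restate the goal in terms of the quantities appearing there: by \cref{def:arnoldi-aggregation} we have $\tilde p_k\transp = \pi_k\transp A = \pi_0\transp H_j^k Q_j$, so the assertion $p_k = \tilde p_k$ is equivalent to $\pi_0\transp H_j^k Q_j = p_0\transp P^k$ for every $0 \le k \le j - 1$.

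Next I would unpack the right-hand side of \cref{lem:closed-form-error}. Since $\pi_0 = (\norm{p_0}_2, 0, \dots, 0)\transp$ and the first row of $Q_j$ is $q_1\transp = p_0\transp / \norm{p_0}_2$, we obtain $\pi_0\transp Q_j = p_0\transp$ (this is exactly why $\pi_0$ was chosen this way), hence $\pi_0\transp Q_j P^k = p_0\transp P^k = p_k\transp$. Thus \cref{lem:closed-form-error} rearranges to
\[
    \pi_0\transp H_j^k Q_j = p_k\transp - \sum_{i = 0}^{k - 1} \pi_0\transp H_j^i \bigl(h_{j, j + 1} e_j q_{j + 1}\transp\bigr) P^{k - 1 - i},
\]
so it remains to show that the correction sum vanishes as long as $k \le j - 1$.

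Then I would show each summand is zero. Writing $\pi_i\transp = \pi_0\transp H_j^i$, the scalar factor of the $i$th summand is $\pi_0\transp H_j^i e_j = (\pi_i)_j$, the $j$th component of $\pi_i$. For $0 \le i \le k - 1 \le j - 2$, \cref{lem:arnoldi-aggregation-pi_k-form} tells us $\pi_i$ has nonzero entries only among its first $i + 1 \le j - 1$ coordinates (and for $i = 0$, $\pi_0$ is supported on its first coordinate, so $(\pi_0)_j = 0$ once $j \ge 2$). Hence $(\pi_i)_j = 0$ for every index in the sum, so each term $\pi_0\transp H_j^i \bigl(h_{j, j + 1} e_j q_{j + 1}\transp\bigr) P^{k - 1 - i} = (\pi_i)_j \, h_{j, j + 1}\, q_{j + 1}\transp P^{k - 1 - i}$ is zero, the whole sum vanishes, and we conclude $\tilde p_k\transp = \pi_0\transp H_j^k Q_j = p_k\transp$. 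The case $k = 0$ is immediate (the sum is empty and $\tilde p_0\transp = \pi_0\transp Q_j = p_0\transp$), and for $j = 1$ only $k = 0$ is in range.

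I do not expect a genuine obstacle here; the proposition is essentially an assembly of \cref{lem:closed-form-error,lem:arnoldi-aggregation-pi_k-form}. The one point requiring care is the index bookkeeping: one must notice that $i \le k - 1$ together with $k \le j - 1$ forces $i \le j - 2$, which is precisely the regime in which \cref{lem:arnoldi-aggregation-pi_k-form} guarantees a zero in position $j$ of $\pi_i$. A secondary, purely cosmetic, point is the degenerate situation of an invariant Krylov subspace, where $h_{j, j + 1} = 0$ makes the correction sum vanish outright regardless of the components of the $\pi_i$.
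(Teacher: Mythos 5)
Your proposal is correct and follows essentially the same route as the paper: combine \cref{lem:closed-form-error} with \cref{lem:arnoldi-aggregation-pi_k-form} (via $\pi_0\transp Q_j = p_0\transp$) and observe that each correction term $\pi_0\transp H_j^i e_j = (\pi_i)_j$ vanishes for $i \le k-1 \le j-2$. You merely spell out the index bookkeeping and the edge cases that the paper's compressed chain of equalities leaves implicit.
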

\begin{proof}
    Combine \cref{lem:arnoldi-aggregation-pi_k-form,lem:closed-form-error} to get
    \begin{align*}
        p_k\transp &= \pi_0\transp Q_j P^k \txteq{Lem.~\labelcref{lem:closed-form-error}} \pi_0\transp H_j^k Q_j + \sum_{i = 0}^{k - 1} \pi_0\transp H_j^i \left(h_{j, j + 1} e_j q_{j + 1}\transp\right) P^{k - 1 - i}\\
        &\txteq{Lem.~\labelcref{lem:arnoldi-aggregation-pi_k-form}} \pi_0\transp H_j^k Q_j + \sum_{i = 0}^{k - 1} \mathbf{0}_n\transp P^{k - 1 - i} = \pi_0\transp H_j^k Q_j = \tilde{p}_k\transp.
    \end{align*}
\end{proof}
\cref{prop:arnoldi-aggr-inital-exact} motivates our choice of $\pi_0$ as
it not only guarantees a necessary condition for exactness, $\norm{\errvec_0}_1 = 0$, but extends it to the first $j - 1$ steps.
\begin{definition}[Initial exactness]
    We call an aggregation initially exact, or more precisely, $(j - 1)$-exact, if $\norm{\errvec_k}_1 = 0$ for $0 \leq k \leq j - 1$.
\end{definition}
\begin{remark}
    \cref{lem:closed-form-error,prop:arnoldi-aggr-inital-exact} imply
    \[
        \norm*{\errvec_k}_1 = \norm*{\sum_{i = j - 1}^{k - 1} \pi_0\transp H_j^i \left(h_{j, j + 1} e_j q_{j + 1}\transp\right) P^{k - 1 - i}}_1,
    \]
    providing a closed-form formula for $\norm{\errvec_k}_1$ in Arnoldi aggregations.
\end{remark}
\begin{proposition}\label{prop:arnoldi-aggr-invar-exact}
    An Arnoldi aggregation of size $j$ with the Krylov subspace $\mathcal{K}^j(p_0, P)$ being invariant under $P$ is exact.
\end{proposition}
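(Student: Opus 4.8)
The plan is simply to verify the two conditions that, by the definition of exactness in \cref{subsec:notation}, need to hold: dynamic-exactness $\Pi A = AP$, and the initial condition $p_0 = \tilde p_0$.

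For the first condition, I would note that $\mathcal{K}^j(p_0, P)$ being invariant under $P$ is exactly the hypothesis $\mathcal{K}^j(p_0, P) = \mathcal{K}^{j+1}(p_0, P)$ under which \cref{eq:red-arnoldi-relation} is stated to hold, so that $H_j Q_j = Q_j P$. Since the Arnoldi aggregation of size $j$ is defined via $\Pi \coloneqq H_j$ and $A \coloneqq Q_j$, this is precisely $\Pi A = AP$, i.e.\ the aggregation is dynamic-exact.

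For the second condition, I would use the choice $\pi_0 = (\norm{p_0}_2, 0, \dots, 0)\transp$ together with line~2 of \cref{alg:arnoldi-iteration}, which sets $q_1 = p_0 / \norm{p_0}_2$ (well-defined because an initial distribution is nonzero). Then $\tilde p_0\transp = \pi_0\transp A = \pi_0\transp Q_j = \norm{p_0}_2\, q_1\transp = p_0\transp$, hence $p_0 = \tilde p_0$. Combined with dynamic-exactness, this gives exactness.

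I do not expect a genuine obstacle here; the only point deserving care is recognising that the invariance hypothesis is exactly what lets one pass from the general Arnoldi relation \cref{eq:arnoldi-relation} to the reduced one \cref{eq:red-arnoldi-relation} (equivalently, that invariance forces $h_{j,j+1} = \norm{r_{j+1}}_2 = 0$). As an alternative route, one could instead invoke the closed-form error of the preceding Remark: invariance forces $h_{j,j+1} = 0$, so every summand there vanishes and $\norm{\errvec_k}_1 = 0$ for all $k \geq 0$, which together with \cref{prop:arnoldi-aggr-inital-exact} again yields exactness.
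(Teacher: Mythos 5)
Your proposal is correct and follows essentially the same route as the paper: dynamic-exactness from the invariance hypothesis via \cref{eq:red-arnoldi-relation}, and the initial condition $p_0 = \tilde p_0$, which the paper obtains by citing \cref{prop:arnoldi-aggr-inital-exact} while you verify it by the (equivalent) direct computation $\pi_0\transp Q_j = \norm{p_0}_2\, q_1\transp = p_0\transp$. No gaps.
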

\begin{proof}
    Dynamic-exactness follows by \cref{eq:red-arnoldi-relation} with invariance of $\mathcal{K}^j(p_0, P)$, and $p_0 = \tilde{p}_0$ follows through \cref{prop:arnoldi-aggr-inital-exact}.
\end{proof}
\begin{theorem}\label{thrm:smallest-initial-exact-aggr}
    An Arnoldi aggregation of size $j$ is a smallest $(j - 1)$-exact aggregation of $P$ and $p_0$.
\end{theorem}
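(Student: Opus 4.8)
The plan is to establish two facts: (i) the Arnoldi aggregation of size $j$ is itself $(j-1)$-exact and has dimension exactly $j$, and (ii) no $(j-1)$-exact aggregation of $P$ and $p_0$ can have dimension strictly smaller than $j$. Fact (i) is essentially already in hand: by \cref{prop:arnoldi-aggr-inital-exact} we have $p_k=\tilde p_k$ for $0\le k\le j-1$, hence $\norm{\errvec_k}_1=0$ there, and since $\Pi=H_j\in\bbR^{j\times j}$ the aggregation has dimension $j$. So the whole statement reduces to the minimality claim (ii), which I would prove by a dimension count on the Krylov subspace $\mathcal{K}^j(p_0,P)$.

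The first step is to record that $\dim\mathcal{K}^j(p_0,P)=j$. An Arnoldi aggregation ``of size $j$'' presupposes that \cref{alg:arnoldi-iteration} completed $j$ passes of the loop without terminating earlier; in particular $h_{i,i+1}\ne 0$ for $1\le i\le j-1$, which is exactly what makes $q_{i+1}$ well-defined and keeps $q_1\transp,\dots,q_j\transp$ an orthonormal system (equivalently, $\mathcal{K}^i(p_0,P)\subsetneq\mathcal{K}^{i+1}(p_0,P)$ for those $i$). Since these $j$ orthonormal, hence linearly independent, vectors lie in $\mathcal{K}^j(p_0,P)=\spn\Set{p_0\transp,p_0\transp P,\dots,p_0\transp P^{j-1}}$, which is spanned by $j$ vectors, they form a basis of it and $\dim\mathcal{K}^j(p_0,P)=j$.

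Next I would take an arbitrary $(j-1)$-exact aggregation $(\Pi,A,\pi_0)$ with $\Pi\in\bbR^{m\times m}$, $A\in\bbR^{m\times n}$, $\pi_0\in\bbR^m$. By definition $\norm{\errvec_k}_1=0$ for $0\le k\le j-1$, i.e.\ $\pi_0\transp\Pi^k A=\tilde p_k\transp=p_k\transp=p_0\transp P^k$ for those $k$. The row vectors $\pi_0\transp,\pi_0\transp\Pi,\dots,\pi_0\transp\Pi^{j-1}$ lie in $\bbR^m$, so they span a subspace $V$ of $\bbR^m$ with $\dim V\le m$. Right-multiplication by $A$ is linear, so the image $\{x\transp A : x\transp\in V\}$ has dimension at most $\dim V\le m$, and it contains every $\pi_0\transp\Pi^k A=p_0\transp P^k$ for $0\le k\le j-1$, hence contains $\mathcal{K}^j(p_0,P)$. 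Therefore $j=\dim\mathcal{K}^j(p_0,P)\le m$. Together with fact (i) this shows the Arnoldi aggregation of size $j$ attains the minimal dimension $j$ among $(j-1)$-exact aggregations, which is the assertion.

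The only delicate point is the identity $\dim\mathcal{K}^j(p_0,P)=j$, which hinges on correctly reading ``size $j$'' as ``the iteration ran $j$ steps without the Krylov subspace becoming invariant earlier'': had invariance occurred at some $j'<j$, the algorithm would have returned an aggregation of size $j'$ instead, so a genuine size-$j$ Arnoldi aggregation really does come with $j$ independent Krylov vectors. Everything else is the elementary observation that a $(j-1)$-exact aggregation of dimension $m$ forces the vectors $p_0\transp P^k$, $0\le k\le j-1$, to factor through $\bbR^m$ via $A$, so their span can have dimension at most $m$.
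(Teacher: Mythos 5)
Your proof is correct and follows essentially the same route as the paper: both arguments reduce minimality to the observation that $(j-1)$-exactness forces $p_0\transp,\dots,p_{j-1}\transp$ to lie in a space of dimension at most $m$ (the row space of $A$, reached through the aggregated vectors), while this span equals $\spn\set{q_1\transp,\dots,q_j\transp}$ and hence has dimension $j$, giving $j\le m$. Your explicit justification that $\dim\mathcal{K}^j(p_0,P)=j$ via $h_{i,i+1}\neq 0$ for $i<j$ is a welcome sharpening of a point the paper leaves implicit in asserting the linear independence of the $q_i\transp$.
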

\begin{proof}
    Assume we are given $P \in \bbR^{n \times n}$, $p_0 \in \bbR^n$, and an arbitrary $(j - 1)$-exact aggregation with disaggregation matrix $A$ and aggregated transient vectors $\pi_k$.
    Let $A_{\mathrm{arn}}$ be the disaggregation matrix of the corresponding Arnoldi aggregation of size $j$ with rows $q_i$.
    By initial exactness, we must have for $0 \leq k \leq j - 1$ that $\pi_k\transp A = p_k\transp$.
    Thus,
    \begin{align*}
        & \spn \set{p_0\transp, \dots, p_{j - 1}\transp} \subseteq \rowsp(A)\\
        \implies & \spn \set{q_1\transp, \dots, q_j\transp} = \spn \set{p_0\transp, \dots, p_{j - 1}\transp} \subseteq \rowsp(A).
    \end{align*}
    As by definition $\rowsp(A_\mathrm{arn}) = \spn \set{q_1\transp, \dots, q_j\transp}$, the Arnoldi aggregation of size $j$ is indeed of minimal size because $q_1\transp, \dots, q_j\transp$ are linearly independent.
\end{proof}
\begin{theorem}\label{thrm:smallest-exact-aggr}
    An Arnoldi aggregation of size $j$ with $\mathcal{K}^j(p_0, P)$ being invariant under $P$ is a smallest exact aggregation of $P$ and $p_0$.
\end{theorem}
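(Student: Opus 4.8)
The plan is to assemble the statement from the two results already established for Arnoldi aggregations: \cref{prop:arnoldi-aggr-invar-exact}, which under the invariance hypothesis already gives that the size-$j$ Arnoldi aggregation is exact, and \cref{thrm:smallest-initial-exact-aggr}, which supplies the size lower bound. So the only new work is to show that no exact aggregation of $P$ and $p_0$ can have size below $j$, and then to conclude by minimality.

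For the lower bound I would first note that exactness is strictly stronger than $(j-1)$-exactness. Indeed, an exact aggregation satisfies $\Pi A = A P$ together with $p_0 = \tilde p_0$, so $\tilde p_k\transp = \tilde p_0\transp P^k = p_0\transp P^k = p_k\transp$ for every $k \geq 0$; in particular $\norm{\errvec_k}_1 = 0$ for $0 \leq k \leq j - 1$, i.e.\ the aggregation is $(j-1)$-exact. Hence \cref{thrm:smallest-initial-exact-aggr} applies verbatim and shows its size is at least $j$. The mechanism behind this is the same one as in the proof of \cref{thrm:smallest-initial-exact-aggr}: from $p_k\transp = \pi_k\transp A \in \rowsp(A)$ for all $k$ one gets $\spn\Set{p_0\transp, p_1\transp, \dots} \subseteq \rowsp(A)$, and under invariance of $\mathcal{K}^j(p_0, P)$ this span is exactly $\mathcal{K}^j(p_0, P) = \spn\Set{q_1\transp, \dots, q_j\transp}$, a $j$-dimensional space (the size-$j$ Arnoldi aggregation exists precisely because $q_1\transp, \dots, q_j\transp$ are linearly independent), so $A$ has at least $j$ rows.

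Putting the pieces together, the size-$j$ Arnoldi aggregation is exact by \cref{prop:arnoldi-aggr-invar-exact} and attains the lower bound $j$, hence is a smallest exact aggregation. I do not anticipate a real obstacle here; the one point worth stating carefully is the passage from exactness to $(j-1)$-exactness --- that exactness forces agreement of $p_k$ and $\tilde p_k$ for \emph{all} $k$, in particular for the first $j - 1$ steps --- which is exactly what lets the minimality argument of \cref{thrm:smallest-initial-exact-aggr} be reused.
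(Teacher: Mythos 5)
Your proposal is correct and matches the paper's argument in essence: the paper also combines exactness of the size-$j$ Arnoldi aggregation (via \cref{prop:arnoldi-aggr-invar-exact}, implicitly) with the row-space containment $\spn\set{p_0\transp, p_1\transp, \dots} \subseteq \rowsp(A)$ and the fact that $q_1\transp,\dots,q_j\transp$ form a basis of that span under invariance. Your packaging of the lower bound as ``exact $\Rightarrow$ $(j-1)$-exact, then apply \cref{thrm:smallest-initial-exact-aggr}'' is just a slightly more economical reduction than the paper's ``analogous'' rerun of the span argument, not a genuinely different route.
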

\begin{proof}
    Follows analogous to \cref{thrm:smallest-initial-exact-aggr}.
    Given an exact aggregation of arbitrary size $m$ with disaggregation matrix $A$, we know that $\spn \set{p_0\transp, p_1\transp, \dots} \subseteq \rowsp(A)$.
    As $q_1\transp, \dots q_j\transp$ is a basis of $\spn \set{p_0\transp, p_1\transp, \dots}$, the Arnoldi aggregation of size $j$ with $q_1\transp, \dots q_j\transp$ as rows of the disaggregation matrix is minimal, i.e., $j \leq m$.
\end{proof}

\subsection{Determining Convergence}\label{subsec:convergence}
Recall that in \cref{alg:arnoldi-iteration}, we have left the criterion for termination open.
Naturally, with the definition of exactness requiring $\norm{H_j Q_j - Q_j P}_\infty = 0$ and \cref{eq:arnoldi-relation,eq:red-arnoldi-relation}, $\norm{H_j Q_j - Q_j P}_\infty = 0$, $\abs{h_{j, j + 1}} = 0$, or $\norm{r_{j+1}}_1 = 0$ offer themselves as intuitive measurements for detecting exactness.

Unfortunately, none of these approaches work in general, which we illustrate by looking at an example \abbrev{dtmc}.
The \abbrev{rsvp} model is a stochastic process algebra model from~\cite{wang2008rsvp}.
It is composed of a lower channel submodel with capacity $M$, an upper channel submodel with capacity $N$, and some instances of mobile nodes making call requests at a constant rate.
For $M = 7$, $N = 5$ and three mobile nodes, resulting in 842 states, symmetries among these nodes enable an exact aggregation with $j = 234$ for the \abbrev{dtmc} resulting from uniformisation of the original \abbrev{ctmc} with uniformisation rate 30.01~\cite[Sec.~6.4]{michel2025formalbounds}.
Then, by~\cref{thrm:smallest-exact-aggr}, \cref{alg:arnoldi-iteration} must find an exact Arnoldi aggregation at size $j = 234$ or lower.
\begin{figure}[ht]
    \centering
    \begin{tikzpicture}
        \begin{axis}[
            xlabel = {Dimension of Arnoldi aggregation},
            ylabel = {},
            xmin = 1,
            xmax = 251,
            ymin = 0,
            ymax = 5,
            legend pos = south west,
            width = 0.95\linewidth,
            height = 6.0cm,
            colorblindstyle,
        ]
            \addplot coordinates {
                (1,0.0) (11,1.0908799048408153) (21,1.2714725299086103) (31,2.3994343720804068) (41,3.3744424549067644) (51,1.974564734099342) (61,1.4549958539328893) (71,2.635886423931421) (81,3.327572545905713) (91,3.826194437012354) (101,2.359661634688644) (111,1.316392540909928) (121,2.326619350850972) (131,4.791573165296705) (141,1.8134754472282537) (151,1.44818085656352) (161,3.2090732963025825) (171,3.1625529157131225) (181,2.3861462894055463) (191,2.1848736951302916) (201,2.656985284177068) (211,4.124739914381117) (221,2.6615352362188323) (231,2.6580439336644197) (241,1.5070084238839205) (251,3.15919115005751)
            };
            \addlegendentry{$\norm{r_{j+1}}_1$}
            \addplot coordinates {
                (1,0.0) (11,1.7010011406551544) (21,2.025096706540314) (31,3.0686704741675) (41,3.833888678739754) (51,2.3271081747220235) (61,1.6055664357611201) (71,2.932480575916255) (81,3.544326773432891) (91,4.32162697804645) (101,2.4655644383511186) (111,1.6291043643678447) (121,2.496000679955839) (131,4.918729884230226) (141,2.2529847240200693) (151,1.4559098663553325) (161,4.434868689874161) (171,3.205434096032204) (181,2.9354906853395413) (191,2.415205983321618) (201,3.147515801836711) (211,3.3011791421735546) (221,2.7553590032125497) (231,2.6826180515555307) (241,1.4444628946304554) (251,2.5820964319521944) 
            };
            \addlegendentry{$15 \cdot \abs{h_{j, j + 1}}$}
            \addplot coordinates {
                (1,0.000999666777740753) (11,2.6186341020987425) (21,2.823274212986767) (31,2.5502164671553835) (41,3.9067566945008925) (51,2.7086147742752216) (61,2.0739989045969662) (71,3.3026168997547765) (81,4.007103302643773) (91,1.8931423632770408) (101,2.9189107220978103) (111,1.5614694534830804) (121,2.443918314513966) (131,4.347014215643402) (141,2.015266885626381) (151,1.760304728549291) (161,3.324428662917293) (171,3.1680021890030363) (181,2.1281758354743068) (191,2.1792565825551895) (201,2.8346269837043216) (211,4.497026283406091) (221,2.007513863042594) (231,2.1897137922814416) (241,2.1095958860124826) (251,3.540177864152146)
            };
            \addlegendentry{$\norm{H_j Q_j - Q_j P}_\infty$}
        \end{axis}
    \end{tikzpicture}
    \caption{Different values related to $\norm{H_j Q_j - Q_j P}_\infty$ depending on the dimension of Arnoldi aggregations of the \abbrev{rsvp} model.}
    \label{fig:rsvp-no-conv}
\end{figure}
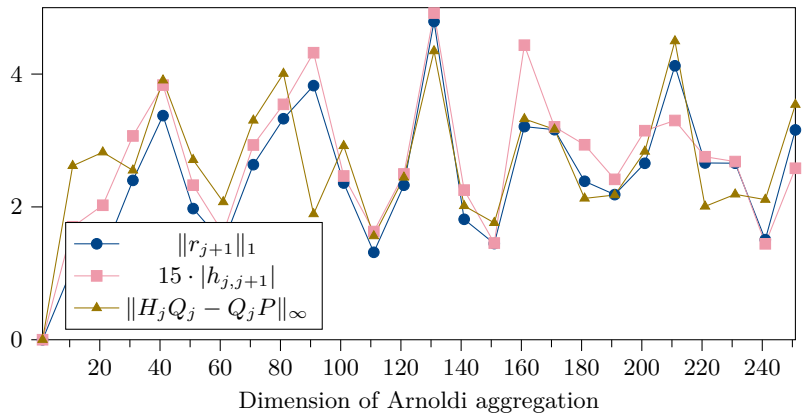
\Cref{fig:rsvp-no-conv} shows the proposed naive termination criteria for $p_0 = (1, 0, \dots, 0)\transp \in \bbR^{842}$, for which an exact aggregation as described exists.
Note that $\abs{h_{j, j + 1}}$ is multiplied by 15 in \Cref{fig:rsvp-no-conv} simply to match the size of the other two criteria.
Since there is obviously no striking behaviour near $j = 234$, we must take a different approach altogether.

By~\cite[Thm.~4~(i)]{michel2025formalbounds} and \cref{prop:arnoldi-aggr-inital-exact}, the error bound
\begin{equation}\label{eq:arnoldi-bound}
    \norm{\errvec_k}_1 \leq \sum_{j = 0}^{k - 1} \inp*{\abs{\pi_k}, \abs{H_j Q_j - Q_j P} \cdot \mathbf{1}_n}
\end{equation}
holds for all Arnoldi aggregations.
Note that, in general, \cref{eq:arnoldi-bound} cannot be improved.
Although \cref{eq:arnoldi-bound} offers the tightest general bound, its computational cost is too high for a usable criterion to determine whether to stop the expansion of an Arnoldi aggregation.

For an exact aggregation, it must hold that $\inp*{\abs{\pi_k}, \abs{H_j Q_j - Q_j P} \cdot \mathbf{1}_n} = 0$ for all $k$.
Under the assumption that $(\pi_k)_{k \in \bbN}$ converges to an eigenvector $\pi$, it must then hold that $\inp*{\abs{\pi}, \abs{H_j Q_j - Q_j P} \cdot \mathbf{1}_n} = 0$.
Importantly, this is not an equivalence and the assumption about the convergence of $(\pi_k)_{k \in \bbN}$ does not have to hold either.
Still, we propose to use $\inp*{\abs{\pi}, \abs{H_j Q_j - Q_j P} \cdot \mathbf{1}_n} \leq \varepsilon$ for some given $\varepsilon \in \bbR_0^+$ as a stopping criterion.

Per~\cite[Sec.~6.7]{saad2011numericalmethods}, the largest eigenvalues of $H_j$ eventually yield very good approximations of the largest eigenvalues of $P$ for large enough $j$.
In this case, both $H_j$ and $P$, as a transition matrix, will have largest eigenvalue one.
Thus, we use the Krylov-Schur method presented in~\cite{stewart2002krylovschur} to determine $\pi$ with eigenvalue one.
If there are multiple such $\pi$, we choose the one whose eigenvalue is numerically closest to one.
If $\pi$ has complex entries, hinting at improper convergence, we immediately further expand the aggregation.
In practice, this happens only at $j$ so small that the resulting aggregation is unusable.

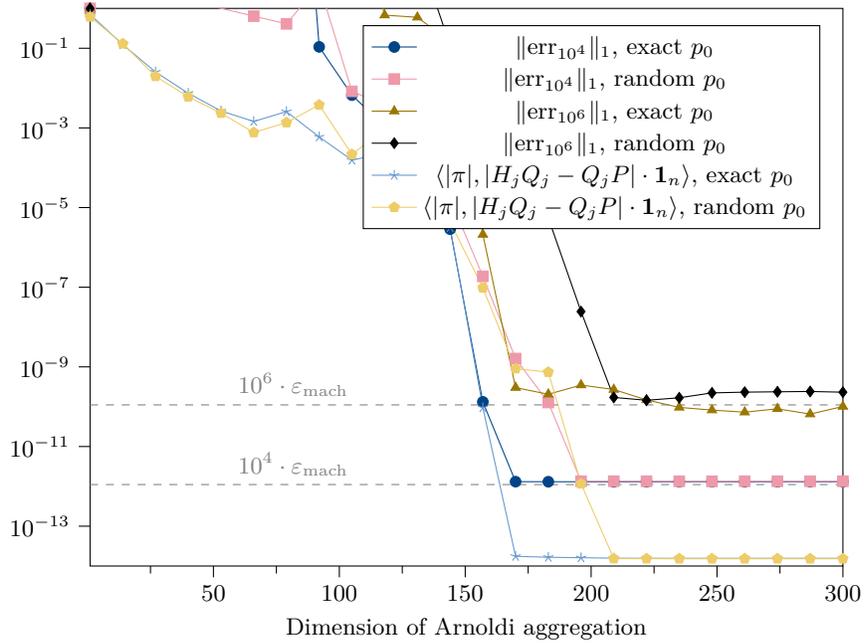
\begin{figure}[ht]
    \centering
    \begin{tikzpicture}
        \begin{axis}[
            xlabel = {Dimension of Arnoldi aggregation},
            ymode = log,
            xmin = 1,
            xmax = 300,
            ymin = 1e-14,
            ymax = 1,
            domain=1:300,
            legend pos = north east,
            width = 0.95\linewidth,
            height = 9.0cm,
            colorblindstyle,
        ]
            \addplot coordinates {
                (1,0.9999999999996783) (14,1.0590014209731492e93) (27,1.093927771271882e9) (40,12.935893732569744) (53,1.601882559284899) (66,1.0263999790864988) (79,4.440360703614884e9) (92,0.10823023233173526) (105,0.006656381653855471) (118,0.0011022447661217093) (131,0.00012873844936988325) (144,2.868205016207439e-6) (157,1.3213192209434936e-10) (170,1.3114389255293993e-12) (183,1.3070378506236906e-12) (196,1.303811571017354e-12) (209,1.305917127743144e-12) (222,1.3076426473347334e-12) (235,1.3091230464697159e-12) (248,1.30885011763483e-12) (261,1.308850117634904e-12) (274,1.308850117634904e-12) (287,1.308850117634904e-12) (300,1.308850117634904e-12)
            };
            \addlegendentry{$\norm{\errvec_{10^4}}_1$, exact $p_0$}
            \addplot coordinates {
                (1,0.9999999999996796) (14,8.682901937131364e17) (27,2465.614499136977) (40,1.499933270633242) (53,1.0019176676563952) (66,0.641075662678966) (79,0.4098992883744737) (92,3.9380877553089775) (105,0.008280336890477236) (118,0.0037067048536584325) (131,0.0004946966456049212) (144,1.6735135945241204e-5) (157,1.8784141269801397e-7) (170,1.6130074700486288e-9) (183,1.276130098916059e-10) (196,1.3477150728000954e-12) (209,1.3494092989391435e-12) (222,1.3454557357461826e-12) (235,1.3445997750046378e-12) (248,1.3382083761360866e-12) (261,1.347495385878242e-12) (274,1.34417131793331e-12) (287,1.34417131793331e-12) (300,1.34417131793331e-12)
            };
            \addlegendentry{$\norm{\errvec_{10^4}}_1$, random $p_0$}
            \addplot coordinates {
                (1,0.9999999999971305) (14,NaN) (27,NaN) (40,2.6210701366959337e127) (53,7.382732221673995e65) (66,3.47238182454086e242) (79,4.1361829948267124e52) (92,NaN) (105,1049.964897942284) (118,0.6778038001260972) (131,0.5982002267321362) (144,0.14791531930119853) (157,2.1103031129639635e-6) (170,3.0179119464193626e-10) (183,2.0514197359653264e-10) (196,3.499224377307924e-10) (209,2.6997817469993446e-10) (222,1.4807839176906262e-10) (235,9.612420811688927e-11) (248,8.233896140165956e-11) (261,7.276852916613362e-11) (274,8.873557096644478e-11) (287,6.527061438350924e-11) (300,1.0133929029448865e-10)
            };
            \addlegendentry{$\norm{\errvec_{10^6}}_1$, exact $p_0$}
            \addplot coordinates {
                (1,0.9999999999971305) (14,NaN) (27,NaN) (40,8.982260194890708e107) (53,1.504391780018984e73) (66,2.773861182232495e21) (79,9.616735326195112e14) (92,NaN) (105,154368.8085179561) (118,NaN) (131,31.16255372603714) (144,0.1160158113476204) (157,0.00195668393705069) (170,1.734235506987551e-5) (183,5.459926185828008e-6) (196,2.4494770614446536e-8) (209,1.709799700882245e-10) (222,1.4506467556195953e-10) (235,1.6746438345988565e-10) (248,2.2211330033513035e-10) (261,2.321736448038248e-10) (274,2.368279248713506e-10) (287,2.4114263538354774e-10) (300,2.3142362550828243e-10)
            };
            \addlegendentry{$\norm{\errvec_{10^6}}_1$, random $p_0$}
            \addplot coordinates {
                (1,0.7041328695662974) (14,0.12342073968442469) (27,0.02532684971642423) (40,0.007408188199569539) (53,0.002666716652444382) (66,0.0014533531651663705) (79,0.002564964200176851) (92,0.0005946063531233087) (105,0.00015526454138921808) (118,0.00023040410830026) (131,5.091677994027011e-5) (144,3.164163069191886e-6) (157,9.429737820802962e-11) (170,1.7513942952862348e-14) (183,1.6594556887428333e-14) (196,1.6161132854087552e-14) (209,1.57220080134689e-14) (222,1.562629761697312e-14) (235,1.5613699267473614e-14) (248,1.5612424595228052e-14) (261,1.5612177757245902e-14) (274,1.5612177760215623e-14) (287,1.5612177758025978e-14) (300,1.5612177758066687e-14)
            };
            \addlegendentry{$\inp*{\abs{\pi}, \abs{H_j Q_j - Q_j P} \cdot \mathbf{1}_n}$, exact $p_0$}
            \addplot coordinates {
                (1,0.61271435355893) (14,0.13008844719682303) (27,0.019734373894162917) (40,0.006051677593685434) (53,0.002347030137527128) (66,0.0007684477820357159) (79,0.001361972498503312) (92,0.003775724460595941) (105,0.00021692615194793293) (118,0.0009337011778195679) (131,0.00018333597018766514) (144,4.3384036873767105e-6) (157,9.65952509301048e-8) (170,9.021946924875666e-10) (183,7.38542836959882e-10) (196,1.161515835346837e-12) (209,1.5337653260138767e-14) (222,1.5254605832760032e-14) (235,1.5238685743953203e-14) (248,1.522610691894044e-14) (261,1.522603265192457e-14) (274,1.52259987841452e-14) (287,1.5225998772688912e-14) (300,1.5225998773445174e-14)
            };
            \addlegendentry{$\inp*{\abs{\pi}, \abs{H_j Q_j - Q_j P} \cdot \mathbf{1}_n}$, random $p_0$}
            \addplot+[no marks, gray, dashed]
                {1.11 * 1e-12}
                node[pos=0.27, above] {$10^4 \cdot \varepsilon_\mathrm{mach}$};
            \addplot+[no marks, gray,dashed]
                {1.11 * 1e-10}
                node[pos=0.27, above] {$10^6 \cdot \varepsilon_\mathrm{mach}$};
        \end{axis}
    \end{tikzpicture}
    \caption{Proposed convergence criterion and error after $10^4$ and $10^6$ steps in the \abbrev{rsvp} model, depending on the dimension of the Arnoldi aggregation.}
    \label{fig:rsvp-conv-crit}
\end{figure}

This results in \cref{fig:rsvp-conv-crit}, where we either used random $p_0$ or initial distributions for which we know that there is an exact aggregation at $j = 234$.
Furthermore, we used ten samples per data point and $\varepsilon_\mathrm{mach}$ is the \abbrev{ieee~754} double precision rounding machine epsilon.
As such, there is a visible correlation between $\norm{\errvec_k}_1$ and $\inp*{\abs{\pi}, \abs{H_j Q_j - Q_j P} \cdot \mathbf{1}_n}$, although no formal proof or connection between these two is known.
Still, over a range of models (see \cref{sec:review}), we were not able to find a case where the general trend, as seen, does not hold.

The algorithm to compute Arnoldi aggregations with $p_k \approx \tilde{p}_k$ is the same as \cref{alg:arnoldi-iteration} but with line 11 replaced, as seen in \cref{alg:arnoldi-aggr-crit}.
\begin{algorithm}[ht]
    \caption{Finding an Arnoldi aggregation with $\inp*{\abs{\pi}, \abs{H_j Q_j - Q_j P} \cdot \mathbf{1}_n} \leq \varepsilon$}\label{alg:arnoldi-aggr-crit}
    \begin{algorithmic}[1]
        \State Let $\varepsilon \in \bbR_0^+$, $p_0 \in \bbR^n$, $P \in \bbR^{n \times n}$.
        \State $q_1 \coloneqq \frac{p_0}{\norm{p_0}_2}$
        \State $Q_1 \coloneqq \begin{pmatrix}\hpipe & q_1\transp & \hpipe\end{pmatrix}$
        \For{$j = 1, 2 \dots$}
            \State $r_1\transp \coloneqq q_j\transp P$
            \State \dots \Comment{\cref{alg:arnoldi-iteration}, lines 6--10}
            \If{$j \pmod{10} = 0$}
                \State Compute dominant eigenvector $\pi\transp$ of $H_j$ as in~\cite{stewart2002krylovschur}
                \State $\pi = \frac{\pi}{\norm*{\pi\transp Q_j}_1}$
                \If{$\pi \in \bbR^j$ and $\inp*{\abs{\pi}, \abs{H_j Q_j - Q_j P} \cdot \mathbf{1}_n} \leq \varepsilon$}
                    \State $\pi_0 \coloneqq (\norm{p_0}_2, 0, \dots, 0)\transp \in \bbR^j$
                    \State \Return $H_j$, $Q_j$, $\pi_0$, $\pi$
                \EndIf
            \EndIf
            \State \dots \Comment{\cref{alg:arnoldi-iteration}, lines 12 and 13}
        \EndFor
    \end{algorithmic}
\end{algorithm}
The convergence criterion is naively applied only every tenth expansion to reduce the runtime.

We can also easily analyse the runtime.
If $P$ is dense, the vector-matrix multiplication in line 5 is in $\mathcal{O}(n^2)$, if $P$ is sparse, it is in $\mathcal{O}(n)$ instead.
The outer for loop is, by definition, repeated $j$ times and the inner one on average $\frac{j}{2}$ times.
Furthermore, the inner product in the inner for loop takes $\mathcal{O}(n)$.
Thus, without the convergence criterion, we are in $\mathcal{O}(n^2j + nj^2)$ for dense $P$ or $\mathcal{O}(nj^2)$ for sparse $P$.
In the computation needed for the convergence criterion, the calculation of $\pi$ is the dominant factor.
There, the Arnoldi iteration is applied to $H_j$ a finite number of times (independent of $H_j$ and $j$) with a random starting vector in our implementation.
As $H_j$ is dense, this is in $\mathcal{O}(j^2)$.
Although further steps are taken to compute $\pi$, they all end up in constant time, even though with a fairly large constant.
Overall, we are in $\mathcal{O}(n^2 j + n j ^2 + j^3)$ for dense $P$ and $\mathcal{O}(n j ^2 + j^3)$ for sparse $P$.
The memory complexity is the same, asymptotically.

    \section{Reviewing Arnoldi Aggregations}\label{sec:review}
Now that we have devised \cref{alg:arnoldi-aggr-crit}, evaluating its performance is of great interest.
An implementation in Julia (see~\cite{bezanson2017julia}) version 1.11.2, using the package KrylovKit~\cite{haegeman2024krylovkit} for the underlying Arnoldi iteration and for the Krylov-Schur method from~\cite{stewart2002krylovschur}, can be found at~\cite{sonnentag2025implementation}.
Using this implementation, we compare \cref{alg:arnoldi-aggr-crit} to the Exlump algorithm~\cite[Algorithm~3]{michel2025formalbounds}.
This algorithm works by forming an explicit partition of the original state space, such that for any two states within the same aggregate, their incoming probabilities are close in some way.
We use a so-called uniform disaggregation matrix $A$ in Exlump aggregations, where $A(i, j)$ is the reciprocal of the size of the $i$th aggregate~\cite[p.~14]{michel2025formalbounds}.

Apart from the already introduced \abbrev{rsvp} model, we compare these two algorithms on a range of Markov chains.
For one, we use a Lotka-Volterra model (see~\cite{gillespie1977lotkavolterra} for example) with a maximum population of 100, resulting in 10{,}201 states, which is uniformised at a rate of 2{,}078.
We also look at a Markov chain with 15{,}540 states, modelling two workstation clusters with 20 workstations per cluster, where each workstation can fail and be repaired~\cite{haverkort2000workstation}.
There, the uniformisation rate is 50.08.
Finally, our largest Markov chain is given by a prokaryotic gene expression model~\cite{kierzek2001genemodel} with a maximal population size of five and a state space of 43{,}957, uniformised at a rate of 16.78.
Note that the Exlump algorithm was implemented in Python, whereas we compute Arnoldi aggregations by using Julia.
Thus, runtimes cannot be compared directly between these two.

We compare transient errors of the \abbrev{rsvp} model in \cref{fig:errorrsvpcomp}, where we see that Exlump aggregations perform similarly to Arnoldi aggregations, with the first usable aggregations at $j \approx 130$.
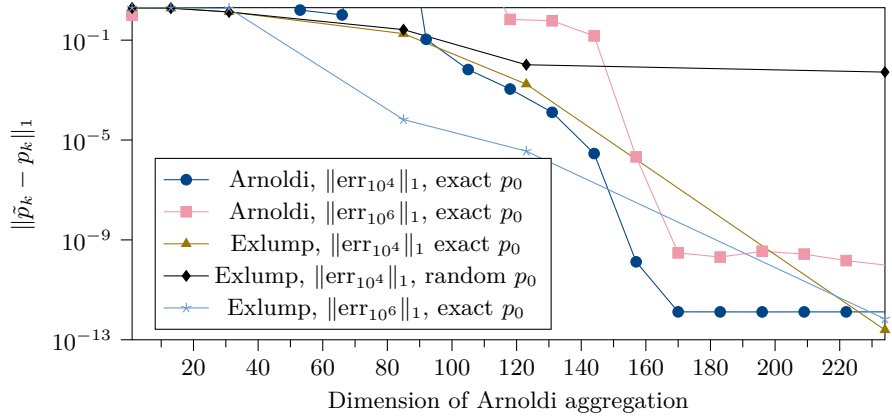
\begin{figure}[ht]
    \centering
    \begin{tikzpicture}
        \begin{axis}[
            xlabel = {Dimension of Arnoldi aggregation},
            ylabel = {$\norm{\tilde{p}_k - p_k}_1$},
            ymode = log,
            xmin = 1,
            xmax = 234,
            ymin = 1e-13,
            ymax = 2,
            legend pos = south west,
            width = 0.95\linewidth,
            height = 6.0cm,
            colorblindstyle,
        ]
            \addplot coordinates {
                (1,0.9999999999996783) (14,1.0590014209731492e93) (27,1.093927771271882e9) (40,12.935893732569744) (53,1.601882559284899) (66,1.0263999790864988) (79,4.440360703614884e9) (92,0.10823023233173526) (105,0.006656381653855471) (118,0.0011022447661217093) (131,0.00012873844936988325) (144,2.868205016207439e-6) (157,1.3213192209434936e-10) (170,1.3114389255293993e-12) (183,1.3070378506236906e-12) (196,1.303811571017354e-12) (209,1.305917127743144e-12) (222,1.3076426473347334e-12) (235,1.3091230464697159e-12)
            };
            \addlegendentry{Arnoldi, $\norm{\errvec_{10^4}}_1$, exact $p_0$}
            \addplot coordinates {
                (1,0.9999999999971305) (14,NaN) (27,NaN) (40,2.6210701366959337e127) (53,7.382732221673995e65) (66,3.47238182454086e242) (79,4.1361829948267124e52) (92,NaN) (105,1049.964897942284) (118,0.6778038001260972) (131,0.5982002267321362) (144,0.14791531930119853) (157,2.1103031129639635e-6) (170,3.0179119464193626e-10) (183,2.0514197359653264e-10) (196,3.499224377307924e-10) (209,2.6997817469993446e-10) (222,1.4807839176906262e-10) (235,9.507454990465803e-11)
            };
            \addlegendentry{Arnoldi, $\norm{\errvec_{10^6}}_1$, exact $p_0$}
            \addplot coordinates {
                (1,1.949928382850986) (13,1.9050753657561208) (31,1.3513770222377024) (85,0.18207581740237608) (123,0.001733212465603741) (234,2.483729145080202e-13)
            };
            \addlegendentry{Exlump, $\norm{\errvec_{10^4}}_1$ exact $p_0$}
            \addplot coordinates {
                (1,1.9498808980075593) (13,1.9098973279096165) (31,1.3409567392481854) (85,0.26378462546805925) (123,0.010335033836082565) (234,0.005262650500545417)
            };
            \addlegendentry{Exlump, $\norm{\errvec_{10^4}}_1$, random $p_0$}
            \addplot coordinates {
                (1,1.9776035540749521) (13,1.9724002080337637) (31,1.8633400696449895) (85,6.583851230470933e-5) (123,3.5792770755141195e-6) (234,6.804311490889649e-13)
            };
            \addlegendentry{Exlump, $\norm{\errvec_{10^6}}_1$, exact $p_0$}
        \end{axis}
    \end{tikzpicture}
    \caption{$\norm{\errvec_{10^4}}_1$ and $\norm{\errvec_{10^6}}_1$ in the \abbrev{rsvp} model for Arnoldi and Exlump aggregations, with different initial distributions, depending on the aggregation dimension.}
    \label{fig:errorrsvpcomp}
\end{figure}
The only exception occurs with random $p_0$ after $10^4$ steps in Exlump aggregations.
This is due to Exlump aggregations being formed independently of $p_0$, but with only specific $p_0$ enabling exact aggregations.

\afterpage{\clearpage}

In contrast to the similar error performance, the runtime analysis in \cref{fig:runtimersvpcomp} shows significant differences.
Note that the different line heights indicating the time needed to compute $p_{10^5}$ without aggregation are caused by Exlump and \cref{alg:arnoldi-iteration,alg:arnoldi-aggr-crit} being implemented in different programming languages.
In \cref{fig:runtimersvpcomp} and the following figures, \cref{alg:arnoldi-iteration} is run for some pre-specified number of iterations without any termination criterion.
Exlump allows to compute $\tilde{p}_{10^5}$ consistently faster than computing $p_{10^5}$ naively.
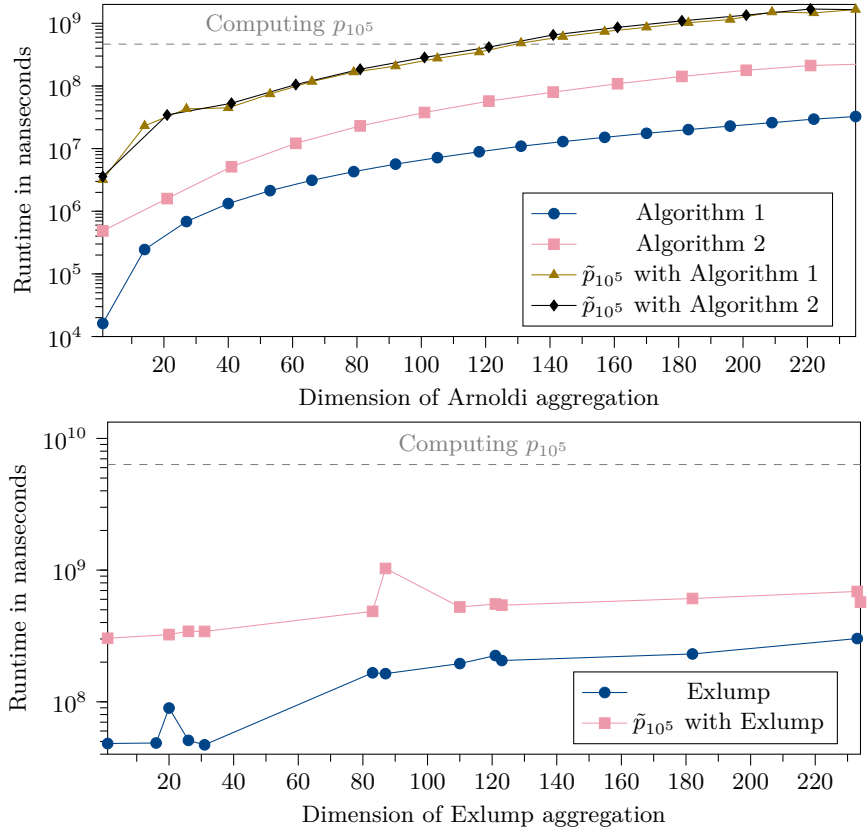
\begin{figure}[ht]
    \centering
    \begin{tikzpicture}
        \begin{axis}[
            xlabel = {Dimension of Arnoldi aggregation},
            ylabel = {Runtime in nanseconds},
            ymode = log,
            xmin = 1,
            xmax = 235,
            ymin = 1e4,
            ymax = 2e9,
            domain=1:235,
            legend pos = south east,
            width = 0.95\linewidth,
            height = 6.0cm,
            colorblindstyle,
        ]
            \addplot coordinates {
                (1,16167.0) (14,244874.99999999997) (27,683708.0) (40,1.327875e6) (53,2.132958e6) (66,3.118167e6) (79,4.288208e6) (92,5.648042e6) (105,7.1655e6) (118,8.897708e6) (131,1.0921125e7) (144,1.2932e7) (157,1.5126333e7) (170,1.7543666e7) (183,2.0087167e7) (196,2.2875292e7) (209,2.5942334e7) (222,2.9546458e7) (235,3.2515125e7)
            };
            \addlegendentry{\cref{alg:arnoldi-iteration}}
            \addplot coordinates {
                (1,485375) (21,1592000) (41,5131000) (61,12145000) (81,23031000) (101,37723000) (121,57244000) (141,79721000) (161,108438000) (181,142017000) (201,177866000) (221,212077000) (241,226023000)
            };
            \addlegendentry{\cref{alg:arnoldi-aggr-crit}}
            \addplot coordinates {
                (1,3.186125e6) (14,2.3111291e7) (27,4.3066084e7) (40,4.466575e7) (53,7.4442042e7) (66,1.17590542e8) (79,1.66460541e8) (92,2.07320792e8) (105,2.76531333e8) (118,3.46874208e8) (131,4.83037583e8) (144,6.13090917e8) (157,7.38502584e8) (170,8.64359417e8) (183,1.022389583e9) (196,1.1495105e9) (209,1.511606833e9) (222,1.464653667e9) (235,1.658823291e9)
            };
            \addlegendentry{$\tilde{p}_{10^5}$ with \cref{alg:arnoldi-iteration}}
            \addplot coordinates {
                (1,3584000) (21,34431000) (41,52960000) (61,105238000) (81,184840000) (101,285200000) (121,416370000) (141,652155000) (161,861237000) (181,1094000000) (201,1351000000) (221,1690000000) (241,1632000000)
            };
            \addlegendentry{$\tilde{p}_{10^5}$ with \cref{alg:arnoldi-aggr-crit}}
            \addplot+[no marks, gray, dashed]{4.64735667e8}
                node[pos=0.25, above] {Computing $p_{10^5}$};
        \end{axis}
    \end{tikzpicture}
    \begin{tikzpicture}
        \begin{axis}[
            xlabel = {Dimension of Exlump aggregation},
            ylabel = {Runtime in nanseconds},
            ymode = log,
            xmin = 1,
            xmax = 234,
            ymin = 4e7,
            ymax = 1.33e10,
            domain=1:234,
            legend pos = south east,
            width = 0.95\linewidth,
            height = 6.0cm,
            colorblindstyle,
        ]
            \addplot coordinates {
                (234 170905113.22021484) (233,302190303.80249023) (182,230839967.72766113) (123,205775260.92529297) (121,224488973.6175537) (110,195050954.8187256) (87,163579940.79589844) (83,165996074.67651367) (31,47124862.67089844) (26,50933122.634887695) (20,89674949.6459961) (16,48655033.111572266) (1,48215779.35736731)
            };
            \addlegendentry{Exlump}
            \addplot coordinates {
                (234,570801973.3428955) (233,688514709.4726562) (182,608697891.2353516) (123,541729927.0629883) (121,552939891.8151855) (110,524688005.4473877) (87,1029531002.0446777) (83,486077308.65478516) (31,342112064.36157227) (26,342994213.10424805) (20,323736906.05163574) (1,303632351.47033436)
            };
            \addlegendentry{$\tilde{p}_{10^5}$ with Exlump}
            \addplot+[no marks, gray, dashed]{6337805747}
                node[pos=0.5, above] {Computing $p_{10^5}$};
        \end{axis}
    \end{tikzpicture}
    \caption{Runtime of computing Arnoldi and Exlump aggregations of the \abbrev{rsvp} model, both with and without computing $\tilde{p}_{10^5}$, depending on the dimension of the aggregations.}
    \label{fig:runtimersvpcomp}
\end{figure}
While the Exlump algorithm itself is, relatively speaking, faster and scales better for larger $j$ than \cref{alg:arnoldi-iteration,alg:arnoldi-aggr-crit}, it also offers a more important advantage: the $\Pi$ of Exlump aggregations retains the sparsity of $P$, whereas the $H_j$ of an Arnoldi aggregation is dense by definition.
Thus, computing approximated transient distributions is much faster for Exlump aggregations. In comparison, it accounts for around $85\%$ of the total runtime needed to compute $\tilde{p}_{10^5}$ in Arnoldi aggregations.
The criterion from \cref{alg:arnoldi-aggr-crit} takes around $12\%$ of the total runtime with the remaining $3\%$ needed for the actual underlying Arnoldi iteration.

We continue with the errors in the workstation cluster model in \cref{fig:errorclustercomp}, where the behaviour is different.
\begin{figure}[ht]
    \centering
    \begin{tikzpicture}
        \begin{axis}[
            xlabel = {Dimension of Arnoldi aggregation},
            ylabel = {$\norm{\tilde{p}_k - p_k}_1$},
            ymode = log,
            xmin = 3,
            xmax = 5523,
            ymin = 1e-15,
            ymax = 2,
            legend pos = south east,
            width = 0.95\linewidth,
            height = 6.0cm,
            colorblindstyle,
        ]
            \addplot coordinates {
                (1,0.9999999999997736) (101,2.9960676944945974) (201,1.0974347226627747e-5) (301,1.1665471161805482e-9) (401,4.7764182672623035e-12) (801,4.655221907811145e-12) (3000,4.650178456283257e-12) (5523,4.65502374565145e-12)
            };
            \addlegendentry{Arnoldi, $\norm{\errvec_{10^4}}_1$, exact $p_0$}
            \addplot coordinates {
                (1,0.9999999999997214) (101,0.9999999999997214) (201,0.003193343448207156) (301,1.0636349656083822e-7) (401,5.426755841233736e-10) (801,5.474716192697255e-10) (3000,5.580831224640294e-10) (5523,5.601280670949976e-10)
            };
            \addlegendentry{Arnoldi, $\norm{\errvec_{10^6}}_1$, exact $p_0$}
            \addplot coordinates {
                (3,1.996267487746774) (93,0.08103739715454) (447,0.002751336696423843) (813,0.002283314385808007) (1021,0.0022998681863000453) (1135,0.011948128429298623) (1567,0.011638325911909523) (1803,0.00228743294468368) (2117,0.011701582406766284) (2584,0.0010107578319757582) (2658,0.00023701249356537527) (2778,4.897748750512534e-5) (3010,0.0002005197398484969) (3073,0.0012253542411099266) (4099,4.723290446820298e-5) (5523,7.276165492753359e-15)
            };
            \addlegendentry{Exlump, $\norm{\errvec_{10^4}}_1$, exact $p_0$}
            \addplot coordinates {
                (3,1.9962674417491288) (93,0.08103626034232529) (447,0.002751336684639088) (813,0.0022833143002223754) (1021,0.0022998675979318825) (1135,0.011948128555891401) (1567,0.011638325850456214) (1803,0.0022874326423915296) (2117,0.011701582159996388) (2584,0.0010107575552173362) (2658,0.0002370122029990647) (2778,4.897761123273923e-5) (3010,0.00020051943143081547) (3073,0.001225354205629224) (4099,4.723293679567371e-5) (5523,1.4543713377705557e-13)
            };
            \addlegendentry{Exlump, $\norm{\errvec_{10^6}}_1$, exact $p_0$}
        \end{axis}
    \end{tikzpicture}
    \caption{$\norm{\errvec_{10^4}}_1$ and $\norm{\errvec_{10^6}}_1$ in the workstation cluster model for Arnoldi and Exlump aggregations, depending on the aggregation dimension.}
    \label{fig:errorclustercomp}
\end{figure}
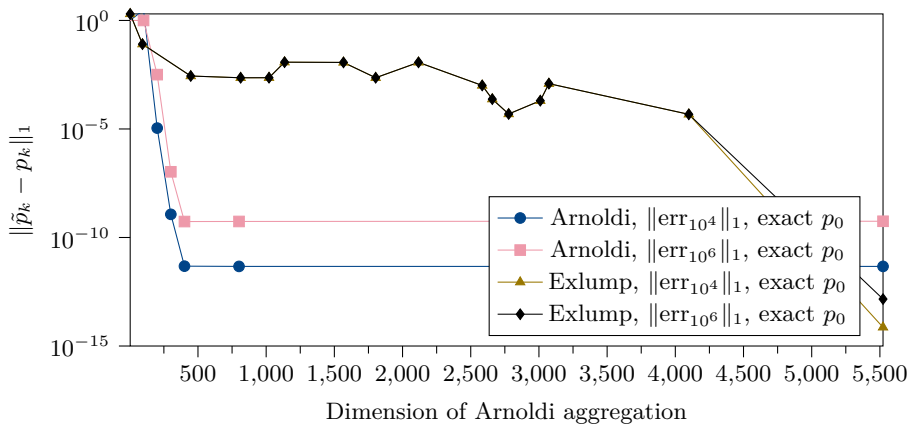
Arnoldi aggregations have already reached their optimum at $j \approx 400$, beating Exlump aggregations by far until their exact aggregation at $j = 5{,}523$.
Notably, in Arnoldi aggregations we consistently have $k  \varepsilon_{\mathrm{mach}} \approx \norm{\errvec_k}_1$ for $j \gtrsim 400$, while Exlump aggregations can eventually achieve a lower error.
This can be explained by $\tilde{p}_k$ converging toward the actual stationary distribution in Exlump aggregations (because the actual stationary distribution is compatible with the exact aggregation at $j=5{,}523$, see~\cite[Theorem~3]{buchholz1994lumpability}), which is not guaranteed for Arnoldi aggregations of the size considered here.
Generally speaking, random $p_0$ yield the same effect as shown here for those $p_0$ which enable an exact aggregation at $j = 5{,}523$.

While computing $\tilde{p}_k$ still takes the most time for the workstation cluster model in \cref{fig:runtimeclustercomp} with the Arnoldi aggregation, its share went down to about 45\% with the convergence criterion now needing roughly 40\% of the total runtime.
Still, the results are much better than in the \abbrev{rsvp} model.
\begin{figure}[ht]
    \centering
    \begin{tikzpicture}
        \begin{axis}[
            xlabel = {Dimension of Arnoldi aggregation},
            ylabel = {Runtime in nanseconds},
            ymode = log,
            xmin = 1,
            xmax = 505,
            ymin = 1e5,
            ymax = 4e10,
            domain=1:505,
            legend pos = south east,
            width = 0.95\linewidth,
            height = 6.0cm,
            colorblindstyle,
        ]
            \addplot coordinates {
                (1,283167.0) (43,1.8899417e7) (85,6.3126959e7) (127,1.15376875e8) (169,2.13173625e8) (211,3.74552667e8) (253,5.0098699999999994e8) (295,7.24549041e8) (337,9.57630625e8) (379,1.236357083e9) (421,1.528603083e9)(463,1.911957709e9) (505,2.470805834e9)
            };
            \addlegendentry{\cref{alg:arnoldi-iteration}}
            \addplot coordinates {
                (1,27017000) (51,93335000) (101,244882000) (151,611977000) (201,1190000000) (251,1866000000) (301,2845000000) (351,3983000000) (401,5521000000) (451,7035000000) (501,9208000000) (551,11552000000)
            };
            \addlegendentry{\cref{alg:arnoldi-aggr-crit}}
            \addplot coordinates {
                (1,3.928208e6) (43,8.4052625e7) (85,2.42091541e8) (127,5.1614574999999994e8) (169,1.030604833e9) (211,1.992986e9) (253,2.607595792e9) (295,3.393051125e9) (337,4.396471584e9) (379,5.717839958e9) (421,7.101320125e9) (463,8.323566916000001e9) (505,9.866025708e9)
            };
            \addlegendentry{$\tilde{p}_{10^5}$ with \cref{alg:arnoldi-iteration}}
            \addplot coordinates {
                (1,31264000) (51,156812000) (101,473340000) (151,1188000000) (201,2168000000) (251,3771000000) (301,5899000000) (351,7564000000) (401,10155000000) (451,13009000000) (501,16293000000) (551,20174000000)
            };
            \addlegendentry{$\tilde{p}_{10^5}$ with \cref{alg:arnoldi-aggr-crit}}
            \addplot+[no marks, gray, dashed]{1.000360925e10}
                node[pos=0.275, above] {Computing $p_{10^5}$};
        \end{axis}
    \end{tikzpicture}
    \begin{tikzpicture}
        \begin{axis}[
            xlabel = {Dimension of Exlump aggregation},
            ylabel = {Runtime in nanseconds},
            ymode = log,
            xmin = 3,
            xmax = 5523,
            ymin = 5e8,
            ymax = 2e11,
            domain=3:5523,
            legend pos = south east,
            width = 0.95\linewidth,
            height = 6.0cm,
            colorblindstyle,
        ]
            \addplot coordinates {
                (5523,83314050912.85706) (4389,120294332981.10962) (3339,89052643060.6842) (3011,108575690984.72595) (2824,98519396305.08423) (2658,62364818096.16089) (2539,58140552997.58911) (2258,70762601852.41699) (1803,89563687801.36108) (1531,102784945964.81323) (919,35332195997.23816) (493,22250364065.170288) (209,8075666904.449463) (50,5679363012.313843) (27,2677618741.9891357) (3,537408828.7353516)
            };
            \addlegendentry{Exlump}
            \addplot coordinates {
                (5523,88728263854.98047) (4389,124613713026.04675) (3339,91810101985.9314) (3011,111375453948.97461) (2824,101483062982.5592) (2658,64589128971.09985) (2539,60579787969.58923) (2258,73266474962.2345) 
                (1803,91525496006.01196) (1531,104852325201.03455) (919,34730596780.77698) (493,20454180002.212524) (209,7190715074.539185) (50,4480700969.696045) (27,2837113142.01355) (3,774098873.1384277)
            };
            \addlegendentry{$\tilde{p}_{10^5}$ with Exlump}
            \addplot+[no marks, gray, dashed]{81227878808}
                node[pos=0.135, above] {Computing $p_{10^5}$};
        \end{axis}
    \end{tikzpicture}
    \caption{Runtime of computing Arnoldi and Exlump aggregations of the workstation cluster model, both with and without computing $\tilde{p}_{10^5}$, depending on the dimension of the aggregations.}
    \label{fig:runtimeclustercomp}
\end{figure}
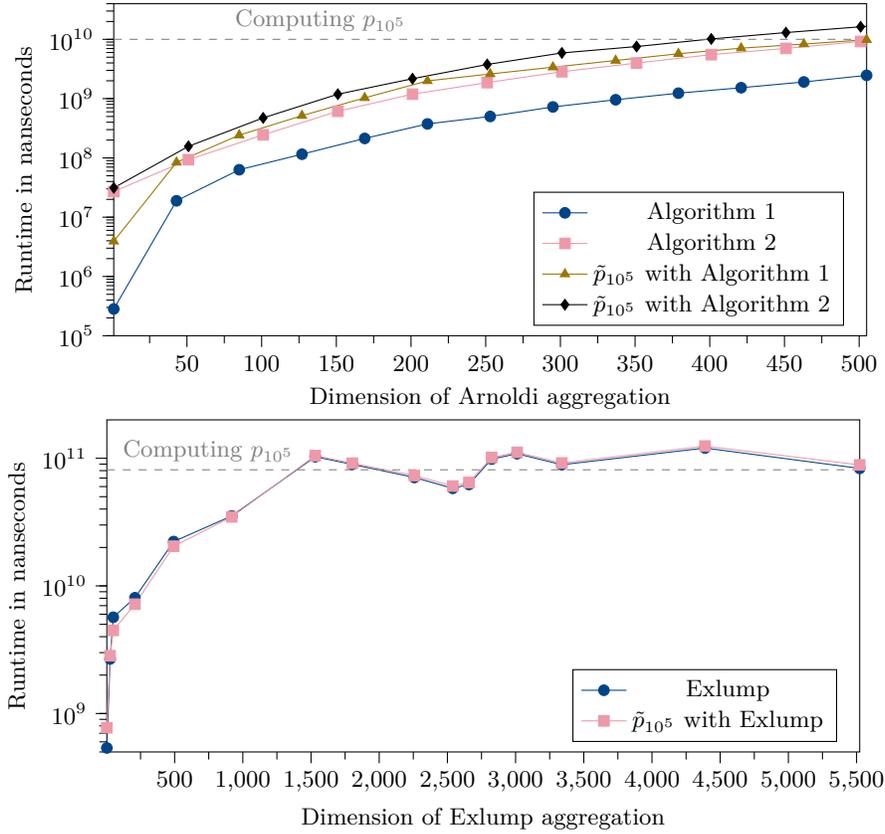
Exlump aggregations fail to consistently beat the naive approach for computing transient distributions, whereas \cref{alg:arnoldi-aggr-crit} is faster up to $j \approx 400$.
An Arnoldi aggregation with $j \approx 300$ has $\norm{\errvec_{10^5}}_1 \approx 10^{-8}$, while being almost four times faster than computing $p_{10^5}$.

Errors for our largest model, the gene expression model, can be seen in \cref{fig:errorgenecomp}.
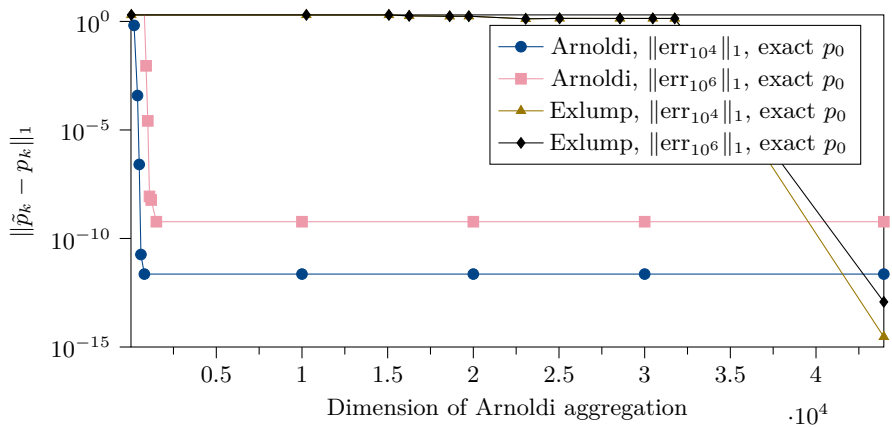
\begin{figure}[ht]
    \centering
    \begin{tikzpicture}
        \begin{axis}[
            xlabel = {Dimension of Arnoldi aggregation},
            ylabel = {$\norm{\tilde{p}_k - p_k}_1$},
            ymode = log,
            xmin = 20,
            xmax = 43957,
            ymin = 1e-15,
            ymax = 2,
            legend pos = north east,
            width = 0.95\linewidth,
            height = 6.0cm,
            colorblindstyle,
        ]
            \addplot coordinates {
                (1,0.9999999999999444) (201,0.6587687932467388) (401,0.0003855514649099467) (501,2.573163693666238e-7) (601,1.8229725718925325e-11) (801,2.290723929352115e-12) (10000,2.29145345565224e-12) (20000,2.29136852853893e-12) (30000,2.29136437438633e-12) (43957,2.29136839814596e-12)
            };
            \addlegendentry{Arnoldi, $\norm{\errvec_{10^4}}_1$, exact $p_0$}
            \addplot coordinates {
                (1,0.99999999999884) (401,130.99441088094216) (801,2.3713831499619022) (901,0.009014472373013217) (1001,2.6196155723189905e-5) (1101,8.694450146679285e-9) (1201,5.926427299167131e-9) (1501,5.926424503754961e-10) (10000,5.926424573529503e-10) (20000,5.926424575234568e-10) (30000,5.926424564017603e-10) (43957,5.9264245769198454e-10)
            };
            \addlegendentry{Arnoldi, $\norm{\errvec_{10^6}}_1$, exact $p_0$}
            \addplot coordinates {
                (9,1.9961241470164677) (42,1.996182001490757) (10257,1.9939769913780414) (15072,1.9931514373096355) (16255,1.803599246729235) (18620,1.7311750713546123) (19738,1.7402383080063302) (23058,1.3188373676417149) (25035,1.3855001493612977) (28557,1.3728751636567667) (30476,1.3761130324038422) (31755,1.3727051844015754) (43957,2.950509031464231e-15)
            };
            \addlegendentry{Exlump, $\norm{\errvec_{10^4}}_1$, exact $p_0$}
            \addplot coordinates {
                (9,1.9961241568073651) (42,1.996182001724781) (10257,1.9939769913942451) (15072,1.99315143733417554) (16255,1.803599246730348) (18620,1.7311750713656678) (19738,1.74023830856783336) (23058,1.3188373678278831) (25035,1.3855001494415672) (28557,1.372875163707443) (30476,1.3761130325532358) (31755,1.3727051844443452) (43957,1.1961529790945572e-13)
            };
            \addlegendentry{Exlump, $\norm{\errvec_{10^6}}_1$, exact $p_0$}
        \end{axis}
    \end{tikzpicture}
    \caption{$\norm{\errvec_{10^4}}_1$ and $\norm{\errvec_{10^6}}_1$ in the gene expression model for Arnoldi and Exlump aggregations, depending on the aggregation dimension.}
    \label{fig:errorgenecomp}
\end{figure}
The only usable Exlump aggregation is the one with $j = n$, thus disqualifying Exlump for this model, while Arnoldi aggregations quickly improve and reach their optimum at around $j \approx 1{,}100$, again limited by rounding errors.

The same holds if we look at the runtime needed to reduce the state space of the gene expression model, as in \cref{fig:runtimegene}.
We did not plot the runtime needed by Exlump aggregations, as they failed to produce any acceptable errors, as seen in \cref{fig:errorgenecomp}.
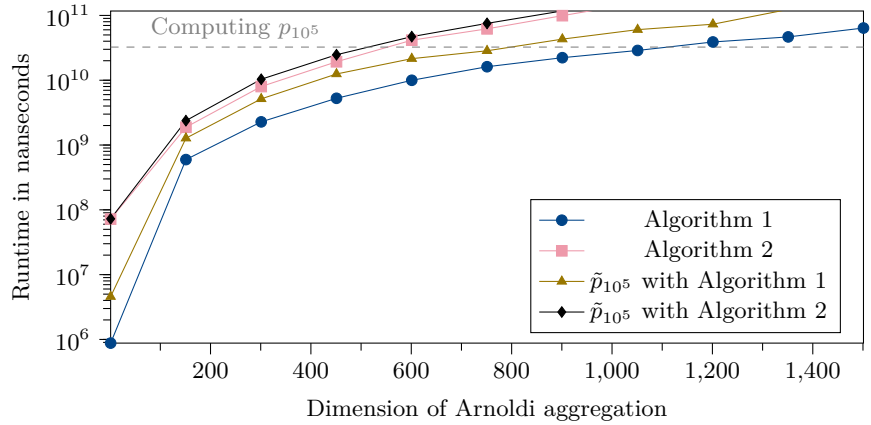
\begin{figure}[ht]
    \centering
    \begin{tikzpicture}
        \begin{axis}[
            xlabel = {Dimension of Arnoldi aggregation},
            ylabel = {Runtime in nanseconds},
            ymode = log,
            xmin = 1,
            xmax = 1501,
            ymin = 877708.0,
            ymax = 1.17e11,
            domain=1:1501,
            legend pos = south east,
            width = 0.95\linewidth,
            height = 6.0cm,
            colorblindstyle,
        ]
            \addplot coordinates {
                (1,877708.0) (151,5.9600975e8) (301,2.276919e9) (451,5.26298625e9) (601,9.984156584e9) (751,1.6156159959e10) (901,2.2274822833e10) (1051,2.8800052708e10) (1201,3.8946550542e10) (1351,4.660551475e10) (1501,6.3921290125e10)
            };
            \addlegendentry{\cref{alg:arnoldi-iteration}}
            \addplot coordinates {
                (1,71995000) (151,1882000000) (301,7992000000) (451,19273000000) (601,41492000000) (751,62089000000) (901,99156000000) (1051,156885000000)
            };
            \addlegendentry{\cref{alg:arnoldi-aggr-crit}}
            \addplot coordinates {
                (1,4.580625e6) (151,1.273896833e9) (301,5.155652917e9) (451,1.2486718792e10) (601,2.1526933875e10) (751,2.8473287625e10) (901,4.3083307875e10) (1051,6.036477925e10) (1201,7.3185328125e10) (1351,1.23776516042e11)
            };
            \addlegendentry{$\tilde{p}_{10^5}$ with \cref{alg:arnoldi-iteration}}
            \addplot coordinates {
                (1,72602000) (151,2377000000) (301,10352000000) (451,24727000000) (601,47043000000) (751,75422000000) (901,117347000000)
            };
            \addlegendentry{$\tilde{p}_{10^5}$ with \cref{alg:arnoldi-aggr-crit}}
            \addplot+[no marks, gray, dashed] {3.2470448625000004e10}
                node[pos=0.167, above] {Computing $p_{10^5}$};
        \end{axis}
    \end{tikzpicture}
    \caption{Runtime of computing Arnoldi aggregations of the gene expression model, both with and without computing $\tilde{p}_{10^5}$, depending on the dimension of the aggregation.}
    \label{fig:runtimegene}
\end{figure}
With such a large model, the dense vector-matrix multiplication forced by a dense $H_j$ in Arnoldi aggregations is no longer dominant.
Instead, the convergence criterion takes up most of the runtime, around 75\%.
$j \approx 450$ is the last dimension of the Arnoldi aggregation where we see a speed-up.
However, an error of $\norm{\errvec_{10^5}}_1 \approx 10^{-1}$ at this size makes it irrelevant for practical use.

Lastly, although not plotted, we get to the Lotka-Volterra model.
It mirrors the behaviour of the gene expression model at a smaller size, as Exlump again fails to find a usable aggregation except for $j = n$, while Arnoldi aggregations converge very soon, although still too late to be fast enough for practical applications.

    \section{Conclusion}\label{sec:conclusion}
\paragraph{Results}
\cref{def:arnoldi-aggregation} defines an Arnoldi iteration based aggregation of a \abbrev{dtmc} that incorporates the initial distribution $p_0$.
\cref{thrm:smallest-initial-exact-aggr,thrm:smallest-exact-aggr} then show that, under exact arithmetic, any (initially) exact Arnoldi aggregation is of minimal size.
In practice, however, numerical instability in the Arnoldi iteration (see \cref{subsec:convergence}) prevents exactness.
Instead, we adopt the convergence criterion $\inp*{\abs*{\pi}, \abs*{H_j Q_j - Q_j P} \cdot \mathbf{1}_n} \leq \varepsilon$, although choosing $\varepsilon$ remains heuristic, since no direct bound links $\norm{\errvec_k}_1$ to $\inp*{\abs*{\pi}, \abs*{H_j Q - Q P} \cdot \mathbf{1}_n}$.
Empirically, the relationship in \cref{fig:rsvp-conv-crit} holds across all tested models.

In experiments, small Arnoldi aggregations suffer from dense $H_j$, such that computing $\tilde p_k$ has cost $\mathcal O(kj^2)$, whereas computing $p_k$ costs only $\mathcal O(kn)$ for sparse $P$.
As the model size grows, computing the eigenvector $\pi$ of $H_j$ eventually dominates the cost of the dense vector-matrix product.
Overall, \cref{alg:arnoldi-aggr-crit} runs in $\mathcal{O}(n^2j + nj^2 + j^3)$ for dense $P$ and in $\mathcal O(nj^2 + j^3)$ for sparse ones.
Despite these costs, we observed substantial speed‑ups at low error bounds for the workstation cluster model, outperforming Exlump aggregations in this case.
Moreover, Arnoldi aggregations consistently produce non‑trivial aggregations, whereas Exlump sometimes fails to do so.

\paragraph{Outlook}
We see several avenues for future work.
One is to evaluate our method against additional aggregation techniques (e.g.,~\cite{abate2021aggregation,bittracher2021aggregation,bucholz2014aggregation,michel2025formalbounds}) over a broader suite of models and a wider range of time points.
A second is to speed up Algorithm~\ref{alg:arnoldi-aggr-crit} by dynamically tuning how often the convergence criterion
$\inp*{\abs*{\pi}, \abs*{H_j Q_j - Q_j P} \cdot \mathbf{1}_n} \leq \varepsilon$
is checked.
Expanding on this, when the criterion is first satisfied at dimension $j+\ell$ but not at $j$, one can exploit the recursive structure of $H_{j+\ell}$ and $Q_{j+\ell}$ along with the Krylov-Schur approach of~\cite{stewart2002krylovschur} to identify an intermediate aggregation dimension $j'$ (with $j < j' < j+\ell$) that still meets the criterion at lower computational cost for $\tilde{p}_k$.
Lastly, the Arnoldi iteration itself could be optimized via parallel or randomized Gram-Schmidt variants (e.g.,~\cite{balabanov2022rgs,lingen2000parallelgs}), improved memory management, or integration with high‑performance libraries such as \abbrev{arpack}~\cite{lehoucq1998arpack}.

    \bibliographystyle{splncs04}
    \bibliography{references.bib}

\begin{thebibliography}{10}
\providecommand{\url}[1]{\texttt{#1}}
\providecommand{\urlprefix}{URL }
\providecommand{\doi}[1]{https://doi.org/#1}

\bibitem{abate2021aggregation}
Abate, A., Andriushchenko, R., Češka, M., Kwiatkowska, M.: Adaptive formal approximations of {M}arkov chains. Performance Evaluation  \textbf{148},  102{,}207 (2021), \url{https://doi.org/10.1016/j.peva.2021.102207}

\bibitem{arnoldi1951iteration}
Arnoldi, W.E.: The principle of minimized iterations in the solution of the matrix eigenvalue problem. Quarterly of Applied Mathematics  \textbf{9}(1),  17--29 (1951), \url{https://doi.org/10.1090/qam/42792}

\bibitem{balabanov2022rgs}
Balabanov, O., Grigori, L.: Randomized {G}ram-{S}chmidt process with application to \abbrev{gmres}. \abbrev{siam} Journal on Scientific Computing  \textbf{44}(3),  A1{,}450--A1{,}474 (2022), \url{https://doi.org/10.1137/20M138870X}

\bibitem{bezanson2017julia}
Bezanson, J., Edelman, A., Karpinski, S., Shah, V.B.: Julia: A fresh approach to numerical computing. \abbrev{siam} review  \textbf{59}(1),  65--98 (2017), \url{https://doi.org/10.1137/141000671}

\bibitem{bittracher2021aggregation}
Bittracher, A., Schütte, C.: A probabilistic algorithm for aggregating vastly undersampled large {M}arkov chains. Physica D: Nonlinear Phenomena  \textbf{416},  132{,}799 (2021), \url{https://doi.org/10.1016/j.physd.2020.132799}

\bibitem{buchholz1994lumpability}
Buchholz, P.: Exact and ordinary lumpability in finite {M}arkov chains. Journal of Applied Probability  \textbf{31}(1),  59--75 (1994), \url{https://doi.org/10.2307/3215235}

\bibitem{bucholz2014aggregation}
Buchholz, P., Kriege, J.: Approximate aggregation of {M}arkovian models using alternating least squares. Performance Evaluation  \textbf{73},  73--90 (2014), \url{https://doi.org/10.1016/j.peva.2013.09.001}, special Issue on the 9th International Conference on Quantitative Evaluation of Systems

\bibitem{courtois1977decomposability}
Courtois, P.J.: Decomposability: Queueing and Computer System Applications. Academic Press, \abbrev{acm} monograph series (1 1977), \url{https://doi.org/10.1016/C2013-0-07302-1}

\bibitem{gillespie1977lotkavolterra}
Gillespie, D.T.: Exact stochastic simulation of coupled chemical reactions. The Journal of Physical Chemistry  \textbf{81}(25),  2{,}340--2{,}361 (12 1977), \url{https://doi.org/10.1021/j100540a008}

\bibitem{haegeman2024krylovkit}
Haegeman, J.: {KrylovKit} (version 0.9.4) (2025), \url{https://doi.org/10.5281/zenodo.14832110}

\bibitem{haverkort2000workstation}
Haverkort, B., Hermanns, H., Katoen, J.P.: On the use of model checking techniques for dependability evaluation. In: Proceedings 19th \abbrev{ieee} Symposium on Reliable Distributed Systems \abbrev{srds-2000}. pp. 228--237 (2000), \url{https://doi.org/10.1109/RELDI.2000.885410}

\bibitem{kierzek2001genemodel}
Kierzek, A.M., Zaim, J., Zielenkiewicz, P.: The effect of transcription and translation initiation frequencies on the stochastic fluctuations in prokaryotic gene expression. Journal of Biological Chemistry  \textbf{276}(11),  8{,}165--8{,}171 (7 2001), \url{https://doi.org/10.1074/jbc.M006264200}

\bibitem{koury1984iterativencd}
Koury, J.R., McAllister, D.F., Stewart, W.J.: Iterative methods for computing stationary distributions of nearly completely decomposable {M}arkov chains. \abbrev{siam} Journal on Algebraic Discrete Methods  \textbf{5}(2),  164--186 (1984), \url{https://doi.org/10.1137/0605019}

\bibitem{lehoucq1998arpack}
Lehoucq, R., Sorensen, D.C., Yang, C.: \abbrev{arpack} Users' Guide. Society for Industrial and Applied Mathematics (4 1998), \url{https://doi.org/10.1137/1.9780898719628}

\bibitem{lingen2000parallelgs}
Lingen, F.J.: Efficient {G}ram-{S}chmidt orthonormalisation on parallel computers. Communications in Numerical Methods in Engineering  \textbf{16}(1),  57--66 (2000), \url{https://doi.org/10.1002/(SICI)1099-0887(200001)16:1<57::AID-CNM320>3.0.CO;2-I}

\bibitem{michel2025formalbounds}
Michel, F., Siegle, M.: Formal error bounds for the state space reduction of {M}arkov chains. Performance Evaluation  \textbf{167},  102{,}464 (2025), \url{https://doi.org/10.1016/j.peva.2024.102464}

\bibitem{saad2011numericalmethods}
Saad, Y.: {K}rylov Subspace Methods, chap.~6, pp. 125--162. Society for Industrial and Applied Mathematics, 2 edn. (2011), \url{https://doi.org/10.1137/1.9781611970739.ch6}

\bibitem{deSouzaeSilva2000transientmc}
de~Souza~e Silva, E., Gail, H.R.: Transient Solutions for {M}arkov Chains, pp. 43--79. Springer \abbrev{us}, Boston, \abbrev{ma} (2000), \url{https://doi.org/10.1007/978-1-4757-4828-4_3}

\bibitem{simonando1961aggregation}
Simon, H.A., Ando, A.: Aggregation of variables in dynamic systems. Econometrica  \textbf{29}(2),  111--138 (1961), \url{https://doi.org/10.2307/1909285}

\bibitem{sonnentag2025thesis}
Sonnentag, P.: Finding the smallest possible exact aggregation of a {M}arkov chain (2025), \url{https://doi.org/10.48550/arXiv.2507.11157}

\bibitem{sonnentag2025implementation}
Sonnentag, P.: {MarkovChainAggregations} (version 0.1.0) (2025), \url{https://doi.org/10.5281/zenodo.16082617}

\bibitem{stewart2002krylovschur}
Stewart, G.W.: A {K}rylov-{S}chur algorithm for large eigenproblems. \abbrev{siam} Journal on Matrix Analysis and Applications  \textbf{23}(3),  601--614 (2002), \url{https://doi.org/10.1137/S0895479800371529}

\bibitem{wang2008rsvp}
Wang, H., Laurenson, D.I., Hillston, J.: Evaluation of \abbrev{rsvp} and mobility-aware \abbrev{rsvp} using performance evaluation process algebra. In: 2008 \abbrev{ieee} International Conference on Communications. pp. 192--197 (2008), \url{https://doi.org/10.1109/ICC.2008.43}

\end{thebibliography}
\end{document}